\documentclass[11pt,reqno]{amsart}

\usepackage{xcolor}
\usepackage[T1]{fontenc}
\usepackage{amsmath}							
\usepackage{amssymb}
\usepackage{amsthm}
\usepackage{amscd}
\usepackage{amsfonts}
\usepackage{mathabx}
\usepackage{stmaryrd}
\usepackage[all]{xy}

\usepackage{caption}
\usepackage{subcaption}
\usepackage{euler}

\usepackage{extarrows}

\usepackage[colorlinks, linktocpage, citecolor = purple, linkcolor = blue]{hyperref}
\usepackage{color}

\usepackage{tikz}									
\usetikzlibrary{matrix}
\usetikzlibrary{patterns}
\usetikzlibrary{positioning}
\usetikzlibrary{decorations.pathmorphing}
\usetikzlibrary{cd}

\usepackage{ytableau}
\usepackage{fullpage}
\usepackage[shortlabels]{enumitem}

\newtheorem{theorem}{Theorem}[section]

\newtheorem{proposition}[theorem]{Proposition}

\theoremstyle{definition}
								
\newtheorem{definition}[theorem]{Definition}
\newtheorem{example}[theorem]{Example}

\newtheorem*{remark*}{Remark}

\newcommand{\ZZ}{\mathbb{Z}}
\newcommand{\CC}{\mathbb{C}}

\newcommand{\QQ}{\mathbb{Q}}
\newcommand{\RR}{\mathbb{R}}

\newcommand{\te}{\widetilde{e}}

\newcommand{\tv}{\widetilde{v}}

\newcommand{\tF}{\widetilde{F}}
\newcommand{\tG}{\widetilde{G}}

\newcommand{\tX}{\widetilde{X}}

\newcommand{\tGa}{\widetilde{\Gamma}}

\newcommand{\Ga}{\Gamma}

\newcommand{\De}{\Delta}

\newcommand{\ph}{\varphi}
\newcommand{\rt}{\mathrm{rt}}

\DeclareMathOperator{\Ker}{Ker}
\DeclareMathOperator{\Ram}{Ram}

\DeclareMathOperator{\Hom}{Hom}
\DeclareMathOperator{\Sym}{Sym}

\DeclareMathOperator{\Vor}{Vor}

\let\Im\relax
\DeclareMathOperator{\Im}{Im}

\newcommand{\FS}{\mathrm{FS}}

\newcommand{\st}{\mathrm{st}}

\DeclareMathOperator{\Pic}{Pic}

\DeclareMathOperator{\trop}{trop}
\DeclareMathOperator{\val}{val}

\DeclareMathOperator{\Id}{Id}
\DeclareMathOperator{\Div}{Div}

\DeclareMathOperator{\Prym}{Prym}

\DeclareMathOperator{\GL}{GL}
\DeclareMathOperator{\Supp}{Supp}

\DeclareMathOperator{\Jac}{Jac}

\title{Resolution of the Prym map in genus 4}
 
\author{Dmitry Zakharov}
\address{Department of Mathematics, Central Michigan University, Mount Pleasant, MI 48859, USA}
\email{\href{mailto:dvzakharov@gmail.com}{dvzakharov@gmail.com}}

\begin{document}

\begin{abstract}

    We use the tropical trigonal construction to describe the minimal toroidal resolution of the Prym period map $\overline{R}_4\dashrightarrow\overline{A}_3$.


\end{abstract}

\maketitle
\setcounter{tocdepth}{1}
\tableofcontents

\section{Introduction}

There are two standard ways of assigning a principally polarized abelian variety (ppav) to an algebraic curve $X$ of genus $g$. First, the Jacobian variety $\Jac(X)$ is a ppav of dimension $g$. Second, given a connected \'etale double cover $\tX\to X$, the Prym variety $\Prym(\tX/X)$ is a ppav of dimension $g-1$. Varying these constructions in moduli, we obtain the Torelli and Prym period maps
\[
t_g:M_g\to A_g,\quad p_g:R_g\to A_{g-1}
\]
between the relevant moduli spaces.

The moduli spaces $M_g$ and $R_g$ have standard compactifications $\overline{M}_g$ and $\overline{R}_g$, whose boundaries consist of strata
\[
\overline{M}_g=\coprod M(G),\quad \overline{R}_g=\coprod R(\tG/G),
\]
parametrizing respectively stable curves of genus $g$ with fixed dual graph $G$ and admissible double covers with fixed dual cover $\tG\to G$. The moduli space $A_g$ has a family of toroidal compactifications $\overline{A}_g^{\tau}$, determined by a choice of admissible decomposition $\tau$ of the rational closure $\Omega^{\mathrm{rt}}_g$ of the cone of positive definite quadratic forms $\Omega_g$. Three standard toroidal compactifications are known: the perfect cone compactification $\overline{A}_g^P$, Igusa's central cone compactification $\overline{A}_g^C$, and the second Voronoi compactification $\overline{A}_g^V$. These three compactifications coincide for $g=2$ and $g=3$, and we denote them by $\overline{A}_2$ and $\overline{A}_3$. The period maps extend to rational maps
\[
\overline{t}^{\tau}_g:\overline{M}_g\dashrightarrow \overline{A}^{\tau}_g,\quad \overline{p}^{\tau}_g:\overline{R}_g\dashrightarrow \overline{A}^{\tau}_{g-1},
\]
and for each $\tau$ we may pose three natural questions, in order of increasing difficulty.

The first question is to describe the indeterminancy loci of the extended period maps. The answer is fully known for the extended Torelli map. Mumford and Namikawa~\cite{1976Namikawa} showed that the Torelli map extends to the second Voronoi compactification, while Alexeev and Brunyate~\cite{2012AlexeevBrunyate} showed that it extends to the perfect cone compactification. On the other hand,~\cite{2012AlexeevBrunyate} and~\cite{2012Alexeevetal} showed that the Torelli map extends to the central cone compactification if and only if $g\leq 8$, disproving a conjecture of Namikawa.

The corresponding problem for the Prym map $\overline{p}^{\tau}_g:\overline{R}_g\dashrightarrow \overline{A}^{\tau}_{g-1}$ is significantly more difficult. Friedman and Smith~\cite{1986FriedmanSmith} constructed examples of boundary strata which do not admit extensions of the Prym period map to any toroidal compactification of $A_g$. Generalizing these examples,~\cite{2002AlexeevBirkenhakeHulek} and~\cite{2002Vologodsky} showed that the indeterminancy locus of $\overline{p}_g^V:\overline{R}_g \dashrightarrow \overline{A}_{g-1}^V$ is the union of the closures of certain \emph{Friedman--Smith} boundary strata $\FS_n$ for $n=2,\ldots,g-1$. The paper~\cite{2017Casalaina-MartinGrushevskyHulekLaza} showed that the indeterminancy locus of $\overline{p}_g^P$ contains $\overline{\mathrm{FS}_2}\cup \overline{\mathrm{FS}_3}$ and does not intersect the strata $\mathrm{FS}_n$ for $n\geq 4$ (but may intersect their closures). The PhD thesis~\cite{2018Frinak} showed that, up to a locus of codimension at least 10, the indeterminancy locus of $\overline{p}_5^P$ is in fact $\overline{\mathrm{FS}_2}\cup \overline{\mathrm{FS}_3}$.

The second natural question is to describe the blowups necessary to resolve the indeterminancy loci. The moduli spaces $\overline{R}_g$ and $\overline{A}_{g-1}^{\tau}$ are locally quotients of toroidal schemes by finite group actions. In the neighborhood of a boundary stratum $R(\tG/G)$ indexed by a double cover $\tG\to G$, the rational map $\overline{p}^{\tau}_g$ is determined by a map of cones $\phi:\RR_{\geq 0}^{E(G)}\to \Omega^{\mathrm{rt}}_{g-1}$ sending the generators of $\RR_{\geq 0}^{E(G)}$ to certain rank one quadratic forms given by the Picard--Lefschetz formula. The period map $\overline{p}^{\tau}_g$ extends to $R(\tG/G)$ if and only if $\phi(\RR_{\geq 0}^{E(G)})$ lies in a cone of the admissible decomposition $\tau$. More generally, any rational decomposition of $\RR_{\geq 0}^{E(G)}$ into subcones having this property determines a toric blowup that resolves the Prym period map $\overline{p}^{\tau}_g$ along $R(\tG/G)$. There is a minimal such decomposition, obtained by pulling back the cones of $\tau$ along $\phi$.

An unpublished result of Alexeev states that a simple blowup is sufficient to resolve the Prym map $\overline{p}^V_g:\overline{R}_g\dashrightarrow \overline{A}_{g-1}^V$ along each Friedman--Smith stratum $\mathrm{FS}_n$ (but not necessarily their closures). Vologodsky~\cite{2004Vologodsky} found the necessary resolutions for a family of boundary strata lying in the closure of $\mathrm{FS}_2$. The paper~\cite{2017Casalaina-MartinGrushevskyHulekLaza} computed the cone decompositions for a number of examples (including the perfect and central cone compactifications), and showed that blowing up $\overline{\mathrm{FS}_2}$ is in fact sufficient to fully resolve the Prym map $\overline{p}_3:\overline{R}_3\dashrightarrow\overline{A}_2$. 

In this paper, we find the minimal toroidal resolution of the extended Prym map $\overline{p}_4:\overline{R}_4\dashrightarrow\overline{A}_3$ (whose indeterminancy locus by~\cite{2002AlexeevBirkenhakeHulek} and~\cite{2002Vologodsky} is $\overline{\mathrm{FS}}_2\cup\overline{\mathrm{FS}}_3$) by describing, for each double cover of weighted graphs $\tG\to G$ with $g(G)=4$, the minimal decomposition of the cone $\RR_{\geq 0}^{E(G)}$ that resolves $\overline{p}_4$ along the stratum $R(\tG/G)$. Our description is based on several ideas from tropical geometry. First, we interpret $\phi:\RR_{\geq 0}^{E(G)}\to \Omega^{\mathrm{rt}}_{g-1}$ as a \emph{tropical Prym period map}: we view a point of $\RR_{\geq 0}^{E(G)}$ as a double cover of metric graphs $\tGa\to \Ga$ with model $\tG\to G$, and its image under $\phi$ as its \emph{tropical Prym variety} $\Prym_c(\tGa/\Ga)$. By a theorem of Cools and Draisma~\cite{2018CoolsDraisma}, every metric graph of genus $g(\Ga)=4$ is \emph{trigonal}, in other words admits (after a tropical modification) a degree 3 harmonic morphism $\Ga'\to \De$ to a matric tree. This determines the \emph{trigonal decomposition} of $\RR_{\geq 0}^{E(G)}$: on each subcone, all metric graphs have a trigonal structure of the same combinatorial type.

We then use the \emph{tropical trigonal construction} of R\"ohrle and the author~\cite{2025RoehrleZakharovA} to express the tropical Prym variety $\Prym_c(\tGa/\Ga)$ as the tropical Jacobian $\Jac(\Pi)$ of a tetragonal graph $\Pi$, whose combinatorial type is constant on the trigonal cones.  Mumford and Namikawa's extension result for $\overline{t}_g^V$ then directly implies that the trigonal cone decomposition resolves the Prym--Torelli map. The minimal decomposition resolving the Prym map can then be obtained by comparing the Voronoi polytopes in each trigonal cone, and turns out to agree with a decomposition conjectured by the author in~\cite{2025Zakharov}.

We conclude by briefly discussing the third natural question, which is to find \emph{modular} interpretations of the extended period maps and their resolutions. While the compactifications $\overline{M}_g$ and $\overline{R}_g$ are modular by definition, only one (to the best of the author's knowledge) of the toroidal compactifications of $\overline{A}_g$ is modular: Alexeev showed in~\cite{2002Alexeev} that the second Voronoi compactification $\overline{A}_g^V$ is the normalization of the main irreducible component of the moduli space $\overline{\mathrm{AP}}_g$ of stable semiabelic pairs. Furthermore, in~\cite{2004Alexeev} he gave a modular interpretation of the corresponding extended Torelli map $\overline{M}_g\to \overline{\mathrm{AP}}_g$ in terms of compactified Jacobians. It would be interesting to find a similar interpretation of the resolved Prym period map, but this question is beyond the scope of this paper. 

Most of the calculations in this paper rely on Sage code that I wrote for the earlier paper~\cite{2025Zakharov}. AI was used for a small part of the code but not for any proofs. I would like to sincerely thank thank Melody Chan, Samuel Grushevsky, Yoav Len, Yelena Mandelshtam, Felix R\"ohrle, Raluca Vlad, and Martin Ulirsch for insightful discussions.

\section{Graphs, double covers, and gonality}

In this section we recall the necessary definitions concerning weighted metric graphs, harmonic morphisms, tropical gonality, and tropical ppavs.

\subsection{Graphs and weighted graphs}

We consider finite graphs, possibly with loops and multiedges, and all graphs are connected unless stated otherwise. We denote the vertex and edge sets of a graph $G$ by $V(G)$ and $E(G)$, respectively. The \emph{valence} $\val(v)$ of a vertex is the number of incident edges, with loops counted twice.

A \emph{weighted graph} $(G,g)$ is a graph $G$ together with a function $g:V(G)\to \ZZ_{\geq 0}$ called the \emph{vertex genus}. Vertex genera play a primarily bookkeeping role and do not affect monodromy calculations. The \emph{genus} of a weighted graph is
\[
g(G)=b_1(G)+\sum_{v\in V(G)}g(v)=|E(G)|-|V(G)|+1+\sum_{v\in V(G)}g(v).
\]
The \emph{Euler characteristic} of a vertex $v\in V(G)$ of a weighted graph $G$ is the quantity
\[
\chi(v)=2-2g(v)-\val(v).
\]
A vertex $v\in V(G)$ is called \emph{stable} if $\chi(v)<0$, \emph{semistable} if $\chi(v)=0$, and \emph{unstable} if $\chi(v)>0$. A weighted graph $(G,g)$ is \emph{stable} if all of its vertices are stable, and \emph{semistable} if all of its vertices are stable or semistable. The \emph{stabilization} $G^{\st}$ of a weighted graph $(G,g)$ of genus $g(G)\geq 2$ is obtained by iteratively removing unstable vertices $v\in V(G)$ (deleting all extremal trees with no vertices of positive genus), and then removing all semistable vertices by joining the two incident edges into a single edge. 

A \emph{metric graph} $\Ga=(G,\ell)$ is a graph $G$ together with an assignment of positive real \emph{lengths} $\ell:E(G)\to \RR_{>0}$ to the edges of $G$, and a \emph{weighted metric graph} is a weighted graph with edge lengths. We do not directly use chip-firing on metric graphs and do not review it here.

\subsection{Harmonic morphisms and ramification}

A \emph{morphism} $f:\tG\to G$ of graphs sends vertices to vertices and edges to edges in a manner that preserves adjacency. A \emph{harmonic morphism} of graphs is a morphism $f:\tG\to G$ together with a \emph{local degree} function $d_f:V(\tG)\cup E(\tG)\to \ZZ_{>0}$, such that for any vertex $\tv\in V(\tG)$ and any edge $e\in E(G)$ rooted at $f(\tv)$ we have
\[
d_f(\tv)=\sum d_f(\te),
\]
where the sum is taken over all edges of $\tG$ rooted at $\tv$ and mapping to $e$. If $G$ is connected, then a harmonic morphism $f:\tG\to G$ has a well-defined \emph{global degree} equal to
\begin{equation*}
\label{eq:harmonicglobaldegree}    
\deg(f)=\sum_{\tv\in f^{-1}(v)}d_f(\tv)=\sum_{\te\in f^{-1}(e)}d_f(\te)
\end{equation*}
for any $v\in V(G)$ or any $e\in E(G)$. 

The \emph{ramification degree} of a harmonic morphism $f:\tG\to G$ of weighted graphs at a vertex $\tv\in V(\tG)$ is 
\begin{equation*}
\label{eq:localRH}
\Ram_f(\tv)=d_f(\tv)\chi(f(\tv))-\chi(\tv).
\end{equation*}
A harmonic morphism $f:\tG\to G$ is called \emph{effective} if $\Ram_f(\tv)\geq 0$ for all $\tv\in V(\tG)$ and \emph{unramified} if $\Ram_f(\tv)=0$ for all $\tv\in V(\tG)$. 

A harmonic morphism of metric graphs $\phi:\tGa\to \Ga$ is a harmonic morphism of the underlying graphs $f:\tG\to G$ with the additional constraint that
\begin{equation}
\ell(f(\te))=d_f(\te)\ell(\te)
\label{eq:dilation}
\end{equation}
for any edge $\te\in E(\tG)$, so that $\phi$ represents a dilation by an integer factor $d_f(\te)$ along $\te$.

\subsection{Edge contractions} Let $(G,g)$ be a weighted graph and $e\in E(G)$ an edge. We define the \emph{contraction} $G_e$ of $G$ along $e$ by merging its root vertices and adding their genera (if $e$ is not a loop) or removing $e$ and increasing the genus of the root vertex by one (if $e$ is a loop). We define the \emph{contraction} $G_F$ of $G$ along a set of edges $F\subset E(G)$ iteratively. It is elementary to verify that edge contraction preserves genus. 

We similarly define the \emph{contraction} $f_F:\tG_{\tF}\to G_F$ of a harmonic morphism $f:\tG\to G$ along a set of edges $F\subset E(G)$. First, we contract $\tG$ along the preimage edges $\tF=f^{-1}(F)$ and define the map $f_F:\tG_{\tF}\to G_F$ in the obvious manner. Second, for each connected component $\tG_i$ of $\tG$ that contracts to a vertex $\tv_i\in \tG_{\tF}$, we set the local degree to be $d_{f_{F}}(\tv_i)=\deg f|_{\tG_i}$, the global degree of $f$ on the contracted subgraph. It is elementary to verify that $f_F$ is also harmonic and $\deg f_F=\deg f$, and furthermore that if $f$ is effective or unramified, then so is $f_F$.

\subsection{Double covers} A \emph{double cover} $p:\tG\to G$ of weighted graphs is an unramified harmonic morphism of degree two. A vertex $v\in V(G)$ is called \emph{free} if $p^{-1}(v)=\{\tv^+,\tv^-\}$ with $d_p(\tv^+)=d_p(\tv^-)=1$ and \emph{dilated} if $p^{-1}(v)=\{\tv\}$ with $d_p(\tv)=2$, and we similarly define free and dilated edges. By the local degree condition, the root vertices of a dilated edge are dilated, so the dilated vertices and edges form a subgraph called the \emph{dilation subgraph}. We say that $p$ is \emph{free} if the dilation subgraph is empty, and \emph{edge-free} if the dilation subgraph consists of isolated vertices. The \emph{torus rank} of a double cover $\pi:\tG\to G$ is
\[
t(\tG/G)=b_1(\tG)-b_1(G).
\]

\begin{example} The \emph{Friedman--Smith} double cover $\pi:\tG\to G$, denoted $FS_n$, is the torus rank $n$ double cover shown on Figure~\ref{fig:FSn}. Both vertices of $G$ are dilated, and no edges are dilated. 
\begin{figure}
    \centering
    \begin{tikzpicture}

    \draw[fill] (0,0) circle(0.07);
    \draw[fill] (3,0) circle(0.07);
    \draw[thin] (0,0) .. controls (1,0.5) and (2,0.5) .. (3,0);
    \draw[thin] (0,0) .. controls (1,-0.5) and (2,-0.5) .. (3,0);
    \draw[fill] (1.5,0) circle (0.02);
    \draw[fill] (1.5,0.2) circle (0.02);
    \draw[fill] (1.5,-0.2) circle (0.02);
    \node at (1.5,0.6) {$e_1$};
    \node at (1.5,-0.6) {$e_n$};

\begin{scope}[xshift=0cm,yshift=3cm]

    \draw[fill] (0,0) circle(0.1);
    \draw[fill] (3,0) circle(0.1);
    \draw[thin] (0,0) .. controls (1,1.3) and (2,1.3) .. (3,0);
    \draw[thin] (0,0) .. controls (1,-1.3) and (2,-1.3) .. (3,0);
    \draw[thin] (0,0) .. controls (1,0.6) and (2,0.6) .. (3,0);
    \draw[thin] (0,0) .. controls (1,-0.6) and (2,-0.6) .. (3,0);
    \draw[fill] (1.5,0) circle (0.02);
    \draw[fill] (1.5,0.2) circle (0.02);
    \draw[fill] (1.5,-0.2) circle (0.02);
    \node at (1.5,1.3) {$\te^+_1$};
    \node at (1.5,-0.7) {$\te^+_n$};
    \node at (1.5,0.7) {$\te^-_1$};
    \node at (1.5,-1.3) {$\te^-_n$};
\end{scope}
    \end{tikzpicture}
    \caption{The $FS_n$ double cover}
    \label{fig:FSn}
\end{figure}
    
\end{example}

A free double cover $p:\tG\to G$ is a degree two cover in the topological sense and is equivalent to the structure of a \emph{signed graph} on $G$, which is a choice of sign on each edge of $G$ up to vertex switching equivalence. In terms of covering space theory, a free double cover is a Galois cover with group $\ZZ/2\ZZ$ and a signed graph structure is an element of the cohomology group $H^1(G,\ZZ/2\ZZ)$. The trivial element corresponds to the trivial double cover, in particular any free double cover of a tree is trivial. 

The contraction of an unramified morphism is unramified, hence the contraction $p_F$ of a double cover $p:\tG\to G$ along a set of edges $F\subset E(G)$ is a double cover. We note that $p_F$ may be dilated even if $p$ is free, this happens if and only if $F$ contains an odd cycle, that is to say, a simple cycle such that the product of the signs along its edges is negative.

Conversely, given an edge-free double cover $p:\tG\to G$, we can (non-canonically) construct a free double cover $p_0:\tG_0\to G_0$ which contracts to $p$ as follows. First, it is elementary to check that an isolated dilated vertex has positive genus. Hence we can attach a loop $e_v$ at every dilated vertex $v\in V(G)$ (reducing the genus by one), and replace the preimage vertex $\tv=p^{-1}(v)$ with a pair of vertices $\tv^+,\tv^-$ mapping to $v$, connected by a pair of edges mapping to $e_v$.

\subsection{Graph gonality}\label{subsec:gonality} The algebraic notion of gonality has several different generalizations for metric graphs, the best-known of which is perhaps divisorial Baker--Norine gonality~\cite{baker2007riemann}. We use a stronger definition, advanced in the paper paper~\cite{2018CoolsDraisma}, in terms of the existence of a harmonic map to a metric tree. This definition is better behaved in moduli, and is also the natural setting for the tropical trigonal construction, which is the main technical tool of this paper. 

\begin{definition} Let $\Ga$ be a metric graph. A \emph{tropical modification} of $\Ga$ is a metric graph $\Ga'$ obtained by attaching finitely many metric trees to $\Ga$. An \emph{$n$-gonal structure} on $\Ga$ is an effective harmonic morphism $\ph:\Ga'\to \De$ of degree $n$, where $\Ga'$ is a tropical modification of $\Ga$ and $\De$ is a metric tree of genus zero. A metric graph $\Ga$ is called \emph{$n$-gonal} if it admits an $n$-gonal structure.
\end{definition}

It was shown in~\cite{2018CoolsDraisma} that this definition of gonality satisfies the same properties, in terms of dimensions in moduli, as algebraic gonality:

\begin{theorem}[Theorem 1 and Corollary 14 in~\cite{2018CoolsDraisma}, see also Proposition 2.6 in~\cite{2025Zakharov}] Let $\Ga$ be a metric graph and let $n\geq \lceil(g+2)/2\rceil$, where $g$ is the genus of $\Ga$. Then there exists an $n$-gonal structure $\ph:\Ga'\to \De$ on $\Ga$, such that furthermore the target tree $\Delta$ has at most $2g+2n-5$ edges.
\label{thm:CD}

\end{theorem}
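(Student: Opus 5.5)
This theorem is cited from~\cite{2018CoolsDraisma}. My plan is to reprove it following the Cools--Draisma strategy: a dimension count in moduli combined with a properness argument. The edge bound on $\De$ falls out of the same count.

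First I will set up a moduli space of $n$-gonal structures of genus $g$. Fix a combinatorial type: a finite tree $T$, a tropical modification type $G'$ containing a model of $\Ga$, and a harmonic morphism $G'\to T$ of degree $n$ that is effective and satisfies the Riemann--Hurwitz condition. The metric data of such a type are the edge lengths of $T$. Condition~\eqref{eq:dilation} then determines the lengths of all edges of $G'$, up to finitely many balancing choices.

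The key local computation is that, after discarding non-minimal (``redundant'') structures, the space of $n$-gonal structures of a given type has dimension at most $|E(T)|$. A Riemann--Hurwitz count shows that for a minimal structure the number of edges of $T$ is at most $2g+2n-5$. Here minimal means that every leaf of $T$ and every vertex of $T$ of valence two carries ramification. The count uses $\sum\Ram=2g-2+2n$ over the tree, with each leaf contributing ramification and stabilization removing the rest. The total space then has dimension at most $2g+2n-5$, which is $\geq 3g-3$ exactly when $n\geq\lceil (g+2)/2\rceil$. This is the numerical threshold in the statement.

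Next I will show that the forgetful map from (minimal, stabilized) $n$-gonal structures to the moduli space $M_g^{\trop}$ of stable metric graphs is surjective. I intend to do this in three steps. The first step is to show that the image is closed. This needs a compactness statement: a sequence of $n$-gonal structures whose source graphs converge admits a convergent subsequence of structures, after contracting edges of $T$ whose lengths go to zero. Contracting edges preserves harmonicity and effectivity, as observed in the section on edge contractions. Lengths going to infinity must be ruled out using the bound on $|E(T)|$, which follows from the Riemann--Hurwitz count, and the boundedness of the source.

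The second step is to show that the image is open in the top-dimensional cells. Cools and Draisma establish this by exhibiting, for a \emph{generic} trivalent graph of each combinatorial type, a structure whose type has local dimension exactly $3g-3$. Near that structure the forgetful map is then a linear map of full rank on the cone of edge lengths. The third step is to conclude: since $M_g^{\trop}$ is connected in codimension one, every maximal cone is covered, and closedness extends this to all graphs.

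The main obstacle is the full-rank/openness step. One needs, for each trivalent genus-$g$ graph, an explicit gonal structure together with the claim that the linear map from tree lengths to graph edge lengths is surjective. I would try to handle this as Cools--Draisma do. I would pick a spanning-tree-based construction that sends each cycle through a chain of ramification points. Then I would use the inequality $n\geq\lceil (g+2)/2\rceil$ to guarantee enough free parameters, arguing by induction on $g$ by attaching a loop or handle. If time is short, I would simply invoke Theorem~1 and Corollary~14 of~\cite{2018CoolsDraisma}, together with Proposition~2.6 of~\cite{2025Zakharov} for the edge bound on $\De$.
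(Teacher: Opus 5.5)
The paper gives no argument of its own here: it quotes \cite{2018CoolsDraisma}, and \cite{2025Zakharov} for the edge bound. Your fallback is therefore exactly what the paper does. Your sketched reproof, however, has a genuine gap in the surjectivity step.

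Closedness of the image, plus a full-rank structure near one point of each maximal cone, only shows that the image contains a nonempty open subset of each cone $\RR_{>0}^{E(G)}$. A closed set with nonempty interior can still be a proper subcone. Indeed, the image of a single combinatorial type is a polyhedral cone that is typically proper; in this paper's genus-$4$ computation, $821$ trigonal types are needed to cover the maximal cones of $M_4^{\trop}$. Connectivity of $M_g^{\trop}$ in codimension one does not help unless you already know the image is open at \emph{every} one of its points in the interior of a maximal or codimension-one cell. That means showing that every $n$-gonal structure on $\Ga$ deforms, possibly changing combinatorial type, to $n$-gonal structures on all nearby metric graphs. This wall-crossing statement (equivalently, density of the image in each maximal cone) is the whole content of the existence claim. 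The inequality $2g+2n-5\geq 3g-3$ only says there is room for it, and your induction by attaching handles does not address it. One standard way around this is algebraic geometry: for rational edge lengths, realize $\Ga$ as the skeleton of a curve over a non-archimedean field. Take a degree $n$ map to $\mathbb{P}^1$ from classical Brill--Noether theory; this is where $n\geq\lceil(g+2)/2\rceil$ enters. Tropicalize it to an effective harmonic morphism from a tropical modification of $\Ga$ to a tree, and then use your compactness step to reach arbitrary lengths.

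Your edge count is also wrong as stated, though it is easy to repair. With the minimality condition you give (every leaf and every bivalent vertex of $T$ carries some ramification), the bound fails. A trivalent tree with $b=2g-2+2n$ leaves, each carrying one simple branch point (the tropical picture of a simply branched cover), satisfies your condition and has $2b-3$ edges. The correct reduction, taking $\Ga$ leafless, has two parts. First, delete every leaf edge of $T$ all of whose preimages are leaf edges of the attached trees. This is forced when the ramification over the leaf is at most $1$, and it preserves effectivity: at each preimage $\tilde u$ of the inner endpoint $u$, it raises $\chi(u)$ by one and $\chi(\tilde u)$ by at most $d(\tilde u)$. Second, smooth the bivalent vertices of $T$ over which there is no ramification. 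After this, the total ramification $\Ram(v)$ over each vertex $v$ of $T$ is at least $3-\val(v)$, so
\[
2g-2+2n=\sum_{v\in V(T)}\Ram(v)\geq\sum_{v\in V(T)}\bigl(3-\val(v)\bigr)=3|V(T)|-2|E(T)|=|E(T)|+3,
\]
which gives $|E(T)|\leq 2g+2n-5$.
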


\subsection{Tropical principally polarized abelian varieties} A \emph{tropical ppav} $X=(\Lambda,[\cdot,\cdot])$ is determined by a free abelian group $\Lambda$ of rank $g$ and a symmetric bilinear pairing $[\cdot,\cdot]:\Lambda\times \Lambda\to \RR$ that induces a positive definite quadratic form on $\Lambda_{\RR}$. There is an inclusion $\Lambda\subset \Lambda^*_{\RR}=\Hom(\Lambda,\RR)$ determined by the map $\lambda\mapsto [\cdot,\lambda]$, and the ppav itself is the $g$-dimensional torus $\Hom(\Lambda,\RR)/\Lambda$. The \emph{Voronoi polytope} of $X$ is the convex symmetric polyhedral domain
\[
\Vor(X)=\{x\in \Lambda^*_{\RR}:\|x\|\leq \|x-\lambda\|\mbox{ for all }\lambda\in \Lambda\}.
\]

We consider two examples of tropical ppavs. Let $\Ga$ be a metric graph with model $G$. The \emph{tropical Jacobian} $\Jac(\Ga)$ is the dimension $g(G)$ tppav determined by the lattice $\Lambda=H_1(G,\ZZ)$ together with the \emph{integration pairing}\footnote{The first copy of $H_1(\Ga,\ZZ)$ should be viewed as the space of \emph{harmonic 1-forms} on $\Ga$, so that the pairing is the integral of a $1$-form along a 1-cycle.} 
\begin{equation}
H_1(G,\ZZ)\times H_1(G,\ZZ)\to \RR,\quad
\left[\sum a_ee,\sum b_ee\right]=\sum a_eb_e\ell(e).
\label{eq:integrationpairing}
\end{equation}
If $\Ga$ has vertex weights, they do not contribute to the Jacobian. Tropical Abel--Jacobi theory identifies $\Jac(\Ga)$ with $\Pic_0(\Ga)$, the set of equivalence classes of degree zero divisors on $\Ga$.

The definition of the \emph{tropical Prym variety} of a double cover of metric graphs $\pi:\tGa\to \Ga$ is somewhat more subtle. The original definition, given by Jensen and Len in~\cite{2018JensenLen}, is based on the algebraic divisorial definition. The double cover defines a norm map 
\[
\pi_*:\Jac(\tGa)\to \Jac(\Ga)
\]
on the tropical Jacobians, and the \emph{divisorial Prym variety} $\Prym_d(\tGa/\Ga)$ is the connected component of the identity of $\Ker \pi_*$. This object suffers from a number of related problems: it does not carry a natural principal polarization, it does not satisfy the expected universal property, and, most importantly for us, it behaves poorly in moduli. Specifically, for a double cover $\pi_F:\tGa_{\tF}\to \Ga_F$ obtained from $\pi:\tGa\to \Ga$ by contracting a set of edges $F\subset E(\Ga)$, the divisorial Prym variety $\Prym_d(\tGa_F/\Ga_F)$ is not in general equal to the limit of $\Prym_d(\tGa/\Ga)$. This problem was resolved in the paper~\cite{2025RoehrleZakharovA}, where we constructed an alternative object, the \emph{continuous Prym variety} $\Prym_c(\tGa/\Ga)$ of a double cover $\pi:\tGa\to \Ga$. Incidentally, this object turns out to be the correct one for studying the extension problem for the Prym period map.

The double cover $\pi:\tGa\to \Ga$ determines an involution $\iota:\tGa\to \tGa$ that exchanges the two points in every fiber over a free point and fixes the dilated points. Let $\iota_*:H_1(\tGa,\ZZ)\to H_1(\tGa,\ZZ)$ be the induced map on the homology and consider the lattice $\Lambda=\Im(\Id-\iota_*)\subset H_1(\tGa,\ZZ)$ of rank $t(\tGa/\Ga)$.

\begin{definition}The \emph{continuous Prym variety} $\Prym_c(\tGa/\Ga)$ of the double cover $\pi:\tGa\to \Ga$ is the dimension $t(\tGa/\Ga)$ tropical ppav defined by the lattice $\Lambda$ and the bilinear pairing $[\cdot,\cdot]:\Lambda\times \Lambda\to \RR$ that is $\frac{1}{2}$ of the restriction of the integration pairing on $H_1(\tG,\ZZ)$. 
\label{def:Prymc}
\end{definition}

The exact relationship between $\Prym_c(\tGa/\Ga)$ and $\Prym_d(\tGa/\Ga)$ is described in~\cite{2025RoehrleZakharovA} and depends on the number of connected components of the dilation subgraph. For our purposes, it is enough to note that
\[
\Prym_c(\tGa/\Ga)=\Prym_d(\tGa/\Ga)
\]
for any free double cover $\tGa\to \Ga$ of metric graphs (also for any double cover with connected dilation subgraph).

\subsection{The trigonal construction} Finally, we recall the tropical version~\cite{2025RoehrleZakharovA} of the algebraic trigonal construction of Recillas~\cite{1974Recillas}. Let $\pi:\tGa\to \Ga$ be a free double cover of a trigonal tropical curve $\ph:\Ga\to \De$ (if it is necessary to attach trees to $\Ga$ to define $\ph$, we attach two copies of each tree to $\tGa$ and extend $\pi$ trivially). For $x\in \De$, define the pullback
\[
\ph^*(x)=\sum_{y\in \ph^{-1}(x)}d_{\ph}(y)y\in \Div(\Ga),
\]
where $\Div(\Ga)$ is the free abelian group on the points of $\Ga$. Similarly, we define the pushforward of a divisor on $\tGa$:
\[
\pi_*\sum_{\widetilde{y}\in \tGa} a_{\widetilde{y}}\widetilde{y}=\sum_{\widetilde{y}\in \tGa} a_{\widetilde{y}}\pi(\widetilde{y}).
\]
We now consider the set of sections of the fibers of $\pi:\tGa\to \Ga$ over the various points of $\Delta$:
\[
\widetilde{\Pi}=\left\{D\in \Div(\tGa):D\geq 0\mbox{ and }\pi_*(D)=\ph^*(x)\mbox{ for some (necessarily unique) }x\in \Delta\right\}.
\]

\begin{theorem}[Theorem 5.1 in~\cite{2025RoehrleZakharovA}] Let $\pi:\tGa\to \Ga$ be a connected free double cover of metric graphs and let $\ph:\Ga\to \De$ be a harmonic morphism of degree 3, where $\De$ is a metric tree. \label{thm:trigonal} 

\begin{enumerate}
    \item There is a natural structure of a metric graph on $\widetilde{\Pi}$ such that the map $\widetilde{\psi}:\widetilde{\Pi}\to \De$, which sends $D\in \widetilde{\Pi}$ to the unique $x\in \Delta$ such that $\pi_*(D)=\ph^*(x)$, is a harmonic morphism of degree 8.

    \item The metric graph $\widetilde{\Pi}$ consists of two isomorphic connected components $\Pi$ that are exchanged by the involution that is induced by the fixed-point-free involution $\iota:\tGa\to \tGa$ corresponding to the double cover, and the restriction $\psi:\Pi\to \De$ of $\widetilde{\psi}$ to each component is a harmonic morphism of degree 4. 
    \item The metric graph $\Pi$ has genus $g(\Pi)=g(\Ga)-1$, and the continuous Prym variety of the free double cover $\pi:\tGa\to \Ga$ and the Jacobian variety of $\Pi$ are isomorphic as tropical ppavs:
\[
\Prym_c(\tGa/\Ga)\simeq \Jac(\Pi).
\]

\end{enumerate}

\end{theorem}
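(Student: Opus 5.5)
We will prove the three parts in order, working locally over $\De$ and then gluing. The plan is to fix models $\tG\to G\to T$ of $\tGa\to\Ga\to\De$ such that every vertex of $\Ga$ over a vertex of $\De$ is a vertex of $G$. Over an interior point $x$ of an edge $e$ of $T$, the fiber $\ph^{-1}(x)$ consists of points $y_i$ with multiplicities $d_i$ summing to $3$. A divisor $D\in\widetilde\Pi$ over $x$ is then a choice, for each $y_i$, of a splitting $d_i=a_i^++a_i^-$ among its two preimages $\tilde y_i^\pm$.

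Counting these choices gives $\prod_i(d_i+1)$ points over $x$, counted with appropriate local degree. We therefore give $\widetilde\Pi$ one edge over $e$ for each combinatorial choice, of length $\ell(e)$ divided by a gcd-type dilation factor. The local degree of the choice $(a_i^\pm)$ is set to $\prod_i\binom{d_i}{a_i^+}$, so the total over $e$ is $\prod_i 2^{d_i}=8$. Over a vertex $v$ of $T$ we do the same with the fiber of $v$, and the local degrees again multiply binomials. Harmonicity at each vertex of $\widetilde\Pi$ then reduces to a Vandermonde-type identity: a splitting of a point of multiplicity $d$ over $v$ refines into splittings of its outgoing edge multiplicities along each direction. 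This proves (1).

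For (2), we use the sign $\epsilon(D)=\sum a_i^-\bmod 2$, computed relative to a fixed local labelling of the sheets. Since $\pi$ is free, the monodromy of the labelling along a path in $\De$ is trivial, because $\De$ is a tree. Continuation through vertices preserves the parity $\sum a^-$, since splittings refine additively. Hence the labelling can be fixed globally, and $\widetilde\Pi$ splits into the parity-even and parity-odd pieces, each of degree $4$. The involution $\iota$ swaps $a^+$ and $a^-$ and so changes the parity by $\sum d_i=3$, which is odd; it therefore exchanges the two pieces. Connectedness of each piece must follow from connectedness of $\tGa$: we would show that a nontrivial odd cycle in $\Ga$ allows us to move between any two sections of the same parity.

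For (3), the genus follows from Riemann–Hurwitz for $\psi:\Pi\to\De$. We compute $\sum_v\Ram_\psi$ locally in terms of the ramification of $\ph$ and find $2g(\Pi)-2=4(-2)+\Ram(\psi)$. With $\Ram(\psi)$ expressed via $\Ram(\ph)=2g(\Ga)+4$, this gives $g(\Pi)=g(\Ga)-1$.

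The main obstacle is the isomorphism $\Prym_c\simeq\Jac(\Pi)$. The plan is to define a map $H_1(\Pi,\ZZ)\to H_1(\tGa,\ZZ)$: a cycle in $\Pi$ is a family of sections, and we send it to the sum, over the traversed edges of $\Pi$, of the chosen edges of $\tGa$ weighted by $a_i^+$. We then project by $\Id-\iota_*$ with a normalization. We will check that the image lands in $\Lambda=\Im(\Id-\iota_*)$ and that the pullback of $\tfrac12$ the integration pairing equals the integration pairing on $\Pi$. The pairing identity is an edge-by-edge computation over each edge $e$ of $T$: for the four sections of a fixed parity, the sum of products of signed multiplicities reproduces the local lengths, by an orthogonality relation for characters of $(\ZZ/2)^3$. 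Showing that the map is an isomorphism onto $\Lambda$, and not merely a finite-index isometric embedding, is the hardest part. For this we would reduce, by the good behavior of $\Prym_c$ under edge contraction, to generic maximal cones where $\ph$ is unramified on edges of multiplicity one. In those cones a rank count ($g(\Pi)=t(\tGa/\Ga)$) together with unimodularity of both pairings (both objects are principally polarized) forces the index to be $1$.
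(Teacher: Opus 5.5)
The paper does not prove this statement. It quotes it from \cite{2025RoehrleZakharovA}, noting only that the divisorial and continuous Pryms agree for free covers, so your argument has to stand on its own. Much of it does.

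\textbf{What works.} The local degrees $\prod_i\binom{d_i}{a_i^+}$ are the right ones, and harmonicity is indeed Vandermonde. However, the corresponding edge of $\widetilde\Pi$ must have length exactly $\ell(e)/\prod_i\binom{d_i}{a_i^+}$, not an unspecified ``gcd-type'' factor. The dilation condition forces this, and your isometry check over dilated edges only works with it.

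The parity splitting is correct, but not because the sheet labelling can be fixed globally; it cannot when $\pi$ is nontrivial. The real reason is this: swapping the labels of the two preimages of some points $y_i$ changes $\sum a_i^-$ by $\sum(d_i-2a_i^-)\equiv\sum d_i \pmod 2$. This shift is the same for every section, so on a tree the shifts can be normalized away.

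The chain map $\alpha(\sigma)=\sum_i(a_i^+\te_i^++a_i^-\te_i^-)$ does send $H_1(\Pi,\ZZ)$ into $\ker\pi_*=\Lambda$. The edge-by-edge computation, using $\sum_{\sigma\mapsto e}\gamma_\sigma=0$ for a cycle $\gamma$ because $\De$ is a tree, gives $\frac12\langle\alpha\gamma,\alpha\gamma'\rangle=\langle\gamma,\gamma'\rangle_\Pi$, and hence injectivity.

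\textbf{First gap: surjectivity.} Isometry plus a rank count only shows that $\alpha(H_1(\Pi,\ZZ))$ has some finite index $k$ in $\Lambda$. ``Unimodularity of both pairings'' cannot force $k=1$. A tropical ppav here is just a lattice with an arbitrary real positive definite form, so a finite-index sublattice with the restricted form is again a tropical ppav. Its Gram determinant is $k^2$ times larger, but you never compare the two determinants.

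The proposed reduction to generic cones does not help either. The theorem allows edges of $\Ga$ of degree $2$ and $3$, which do not disappear under edge contraction. You also give no reason why the index would be constant under specialization.

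What works is a direct local lifting:
\begin{enumerate}
\item Write $\lambda=\mu-\iota_*\mu$. Over an edge $e$ of $\De$, let $x_i$ be the coefficient of $\lambda$ on $\te_i^+$, so its coefficient on $\te_i^-$ is $-x_i$.
\item The unweighted pushforward of the cycle $\mu$ to the tree $\De$ vanishes. Hence $\sum_i x_i\equiv\sum_i\bigl(\mu(\te_i^+)+\mu(\te_i^-)\bigr)=0 \pmod 2$.
\item This parity is exactly what makes the unique solution of $\alpha(\gamma)|_e=\lambda|_e$ integral. In the unramified case it is $\gamma_\epsilon=\frac12(\epsilon_1x_1+\epsilon_2x_2+\epsilon_3x_3)$, and there are analogous half-integer formulas over dilated edges.
\item The map $D\mapsto D$ is injective on the vertices of $\Pi$ over each vertex of $\De$. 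So $\partial\lambda=0$ forces $\partial\gamma=0$.
\end{enumerate}

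\textbf{Second gap: connectedness.} Connectedness of the parity pieces is only promised. It is needed both for (2) and for your genus count. Riemann--Hurwitz with $\Ram(\psi)=\Ram(\ph)$ vertexwise actually gives $b_1(\Pi)-c(\Pi)=g(\Ga)-2$, where $c(\Pi)$ is the number of components. No monodromy argument is needed once $\alpha$ is injective. Injectivity gives $b_1(\Pi)\le\operatorname{rk}\Lambda=g(\Ga)-1$, which is where connectedness of $\tGa$ enters. Together with Riemann--Hurwitz this forces $c(\Pi)=1$ and $g(\Pi)=g(\Ga)-1$.
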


We note that this theorem is stated in~\cite{2025RoehrleZakharovA} for the divisorial Prym variety, which agrees with the continuous Prym variety for free double covers.

\section{Toroidal compactifications and extensions of period maps}

In this section, we briefly review toroidal compactifications of $A_g$ and the extension problem for the Torelli and Prym maps, and rephrase the latter in terms of tropical geometry. We then construct an explicit minimal resolution of the Prym period map $\overline{p}_4:\overline{R}_4\dashrightarrow \overline{A}_3$, which is our main result.

\subsection{Admissible decompositions} The theory of toroidal compactifications of $A_g$ was developed over $\CC$ in~\cite{1975AshMumfordRapoportTai} and extended to the arithmetic setting in~\cite{1990FaltingsChai}, and is parallel to the theory of toric varieties. A toroidal compactification $\overline{A}_g^{\tau}$ is determined by a combinatorial gadget $\tau$ called an \emph{admissible decomposition}, and we recall the necessary definitions.

Fix a lattice $\Lambda=\ZZ^g$. The space $(\Sym^2\Lambda)\otimes_{\ZZ} \RR$ of quadratic forms on $\Lambda$ contains the open cone $\Omega_g$ of positive definite quadratic forms. Denote by $\Omega_g^{\rt}$ the \emph{rational closure} of $\Omega_g$, consisting of positive semidefinite quadratic forms whose null space is defined over $\QQ$. The group $\GL(\Lambda)$ naturally acts on $\Omega_g^{\rt}$ by change of basis.

\begin{definition} An \emph{admissible decomposition} of $\Omega_g^{\rt}$ is a rational polyhedral fan $\tau$ satisfying the following conditions:
\begin{enumerate}

\item $\Supp \tau=\Omega_g^{\rt}$.
\item $\tau$ is preserved by the $\GL(\Lambda)$-action.
\item There are finitely many orbits of cones under the $\GL(\Lambda)$-action.

\end{enumerate}
    
\end{definition}

There are three standard admissible decompositions, each defining a corresponding toroidal compactification of $A_g$. The three fans coincide for $g=3$, the case that we are interested in, and we denote the common toroidal compactification by $\overline{A}_3$. We recall the definition of one of the decompositions. Recall that the \emph{Voronoi polytope} $\Vor(Q)$ of a positive definite quadratic form $Q\in \Omega_g$ is the set of points closer to the origin than to any other lattice point:
\[
\Vor(Q)=\{x\in \RR^g:Q(x)\leq Q(x-v)\mbox{ for all }v\in \ZZ^g\}.
\]
The Voronoi polytope is a convex symmetric polytope cut out by a collection of hyperplanes, corresponding to those nonzero points  $v\in\ZZ^g$ for which the equality $Q(x)=Q(x-v)$ is achieved for some $x\in \RR^g$. These hyperplanes determine the normal fan of $\Vor(Q)$, which is constant near a generic $Q$ but exhibits wall-crossing behavior. Recording how the normal fan changes as we vary $Q$, we obtain the \emph{second Voronoi decomposition}
\[
\Omega_{g}=\bigcup_{C\in \tau^{V}} C,
\]
where two quadratic forms $Q_1$ and $Q_2$ lie in the same cone if and only if $\Vor(Q_1)$ and $\Vor(Q_2)$ are normally equivalent, that is to say, have the same normal fan.

We now review the extension problem for the Torelli $\overline{t}^{\tau}_g:\overline{M}_g\dashrightarrow \overline{A}^{\tau}_g$ and Prym $\overline{p}^{\tau}_g:\overline{R}_g\dashrightarrow \overline{A}^{\tau}_{g-1}$ period maps, following~\cite{2017Casalaina-MartinGrushevskyHulekLaza}. 

\subsection{The Torelli map and tropical Jacobians} We first consider the Torelli map. The moduli space of stable curves $\overline{M}_g$ comes with a stratification
\[
\overline{M}_g=\bigsqcup_{G} M(G)
\]
indexed by weighted stable graphs of genus $g$, where $M(G)$ is the relatively open stratum of stable curves with dual graph $G$ (the vertices and edges correspond to the irreducible components and the nodes, respectively). A stratum $M(G_1)$ lies in the closure of $M(G_2)$ if and only if $G_2$ is an edge contraction of $G_1$. Fix a curve $X\in M(G)$ and assume for simplicity that $G$ has trivial vertex genera. Locally near $X$, the codimension one boundary strata of $\overline{M}_g$ are a collection of simple normal crossing hyperplanes and are indexed by the edges of $G$ (the corresponding graphs are obtained by contracting all other edges).

Choose a basis for the lattice $H_1(G,\ZZ)\simeq \ZZ^g$ (if $G$ has vertex genera, we fix a surjection $\ZZ^g\twoheadrightarrow H_1(G,\ZZ)$). Each edge $e\in E(G)$ defines a quadratic form $(e^{\vee})^2\in \Sym^2(H_1(G,\ZZ)^{\vee})$ of rank one (for example, $e^{\vee}$ may be viewed as a coedge in $H^1(G,\ZZ)=H_1(G,\ZZ)^{\vee}$). By the Picard--Lefschetz formula, $(e^{\vee})^2$ is the quadratic form obtained from the monodromy around the boundary component of $\overline{M}_g$ corresponding to the edge $e$. The \emph{monodromy cone} of $X$ (which depends only on $G$) is the cone spanned by these quadratic forms
\[
C(G)=\RR_{\geq 0} \langle (e^\vee)^2\rangle_{e\in E(G)}\subset \Omega^{\rt}_g.
\]
The Torelli map $\overline{t}_g^{\tau}$ extends to $X$ (and therefore all of $M(G)$) if and only if $C(G)$ is contained in a cone of the admissible decomposition $\tau$. Mumford and Namikawa~\cite{1976Namikawa} and Alexeev~\cite{2012AlexeevBrunyate} proved this for the second Voronoi and perfect cone decompositions, respectively. 

We now make the following elementary observation. A point $Q=\sum \ell(e)(e^{\vee})^2$ in the interior of the monodromy cone is determined by choosing positive real numbers $\ell(e)$ for each $e\in E(G)$. Interpreting these as the edge lengths of a metric graph $\Ga$ with model $G$, we see that $Q$ is the quadratic form defining the tropical Jacobian $\Jac(\Ga)$. In other words, we may view the cone $\RR_{\geq 0}^{E(G)}$ as a stratum in the moduli space $M_g^{\trop}$ of tropical curves, parametrizing metric graphs with model $G$ (and their edge contractions), and the monodromy cone $C(G)$ as its image in the moduli space $A_g^{\trop}=\Omega_g^{\rt}/\GL(g,\ZZ)$ of tropical ppavs under the tropical Torelli map.

Using the criterion for second Voronoi cones described above, we immediately obtain the following result, which is simply a restatement of the Mumford--Namikawa extension theorem for $\overline{t}_g^V$ in tropical terms.                                                      

\begin{proposition} Let $G$ be a graph. The Voronoi polytopes $\Vor(\Jac(\Ga))$ of the tropical Jacobians of all metric graphs $\Ga$ with model $G$ are normally equivalent. \label{prop:Torelliextension}

\end{proposition}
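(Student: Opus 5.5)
The plan is to deduce the statement from the Mumford--Namikawa theorem, using the dictionary between second Voronoi cones and normal equivalence classes of Voronoi polytopes. I would not prove this combinatorially from scratch.

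Fix a graph $G$ of genus $g$ and a surjection $\ZZ^g\twoheadrightarrow H_1(G,\ZZ)$; when $G$ has no vertex genera this is just a choice of basis. A metric graph $\Ga$ with model $G$ is the same as a choice of lengths $\ell\in\RR_{>0}^{E(G)}$. As observed above, the quadratic form of $\Jac(\Ga)$ is then
\[
Q_\ell=\sum_{e\in E(G)}\ell(e)(e^\vee)^2 ,
\]
which lies in the relative interior of the monodromy cone $C(G)$. As $\ell$ ranges over $\RR_{>0}^{E(G)}$, the forms $Q_\ell$ exhaust this relative interior.

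By Mumford--Namikawa~\cite{1976Namikawa}, $C(G)$ is contained in a single cone $\sigma$ of $\tau^V$. Since the relative interior of $C(G)$ is convex and relatively open, I would next check that it lies in the relative interior of one face of $\sigma$. The argument: take the smallest face of $\sigma$ containing $C(G)$. An interior point of $C(G)$ lying on a proper subface $\sigma'$ of that face would force all of $C(G)$ into $\sigma'$, because $\sigma'$ is cut out by a supporting hyperplane and $C(G)$ lies on one side of it.

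By definition of the second Voronoi decomposition, forms in the relative interior of the same cone have normally equivalent Voronoi polytopes. This would give the claim immediately if every $Q_\ell$ were positive definite on $\ZZ^g$.

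The main obstacle is that $Q_\ell$ is only positive semidefinite when $G$ has vertex genera: it lies in $\Omega_g^{\rt}$, not in $\Omega_g$. There are two ways to handle this, and I would try the second first.
\begin{itemize}
\item Interpret the Voronoi polytope of a degenerate form via the decomposition of $\Omega^{\rt}_g$, which parametrizes the Delaunay decomposition of the nondegenerate quotient.
\item Work intrinsically. $\Jac(\Ga)$ is defined on the lattice $H_1(G,\ZZ)$, on which the integration pairing is positive definite. So one can apply the above argument directly with $g$ replaced by $b_1(G)$ and a genuine basis of $H_1(G,\ZZ)$, after contracting the vertex weights away. The Mumford--Namikawa theorem applies to the graph $G$ with weights forgotten, since its monodromy cone lies in $\Omega^{\rt}_{b_1(G)}$.
\end{itemize}
I would also remark that normal equivalence does not depend on the chosen basis, since $\GL$ acts compatibly on forms and polytopes.
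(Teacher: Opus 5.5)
Your proposal is correct and takes the same approach as the paper. The paper simply notes that the statement is the Mumford--Namikawa theorem ($C(G)$ lies in a single second Voronoi cone) restated through the characterization of second Voronoi cones by normal equivalence of Voronoi polytopes. Your extra steps fill in details the paper leaves implicit: checking that the relative interior of $C(G)$ lies in the relative interior of a single face, and working on $H_1(G,\ZZ)$ to handle vertex genera (applying Mumford--Namikawa to the stabilization of the weight-forgotten graph, which does not change the set of forms $Q_\ell$).
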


\subsection{The Prym map and tropical Pryms} The moduli space $\overline{R}_g$ of admissible double covers of genus $g$ curves (see~\cite{1977Beauville}) likewise comes with a stratification
\[
\overline{R}_g=\bigsqcup_{\tG\to G}R(\tG/G)
\]
indexed by double covers of weighted graphs of genus $g$, where $R(\tG/G)$ parametrizes admissible double covers with dual cover $\tG\to G$. Similarly to $\overline{M}_g$, the codimension of $R(\tG/G)$ is equal to the number of edges of $G$, and an edge contraction of $\tG\to G$ to $\tG_{\tF}\to G_F$ corresponds to an inclusion of $R(\tG/G)$ in the closure of $R(\tG_{\tF}/G_F)$. Locally near a point $(\tX\to X)\in R(\tG/G)$, the boundary of $\overline{R}_g$ is a collection of simple normal crossing hyperplanes indexed by the edges of $E(G)$. The monodromy around these boundary components was computed in~\cite{2017Casalaina-MartinGrushevskyHulekLaza}, and we recall the description.

The double cover $p:\tG\to G$ determines an involution $\iota:\tG\to \tG$, and we consider the lattice $\Lambda=\Im (\Id-\iota_*)\subset H_1(\tG,\ZZ)$ of rank $t(\tG/G)\leq g-1$. For each edge $e\in E(G)$ we pick a preimage edge $\te\in E(\tG)$ and consider the rank one quadratic form $(\te^{\vee}-\iota\te^{\vee})^2\in \Sym^2(\Lambda^{\vee})$ (note that if $e$ is dilated, then $\iota\te=\te$ and the quadratic form is zero). By Proposition 4.3 in~\cite{2017Casalaina-MartinGrushevskyHulekLaza}, $(\te^{\vee}-\iota\te^{\vee})^2$ is the quadratic form obtained by monodromy around the boundary component of $\overline{R}_g$ corresponding to the edge $e$. Hence the \emph{monodromy cone} $C(\tG/G)$ of $\tX\to X$, which depends only on the dual double cover $\tG\to G$, is the cone spanned by these quadratic forms:
\begin{equation}
\phi:\RR_{\geq 0}^{E(G)}\to C(\tG/G)\subset \Omega_{g-1}^{\rt},\quad \sum \ell(e) e\mapsto \sum \ell(e)(\te^{\vee}-\iota\te^{\vee})^2,
\label{eq:tropicalPrymperiod}
\end{equation}
where we embed into $\Omega_{g-1}^{\rt}$ by choosing a surjection $\ZZ^{g-1}\twoheadrightarrow \Lambda$. 

We now fix an admissible decomposition $\tau$ of $\Omega^{\rt}_{g-1}$. The Prym period map $\overline{p}_g^{\tau}:\overline{R}_g\dashrightarrow\overline{A}_{g-1}^{\tau}$ extends over the stratum $R(\tG/G)$ if and only if $C(\tG/G)$ is contained in a cone of the admissible decomposition $\tau$. It was shown in~\cite{2002AlexeevBirkenhakeHulek} and~\cite{2002Vologodsky} that this holds for the second Voronoi decomposition if and only if $\tG\to G$ does not admit an edge contraction to the $\FS_n$ double cover for any $n\geq 2$.

More generally, a rational polyhedral fan $\mathcal{F}$ decomposing the cone $\RR_{\geq 0}^{E(G)}$ determines a toric blowup of $\overline{R}_g$ along $R(\tG/G)$. This blowup resolves $\overline{p}_g^{\tau}$ along $R(\tG/G)$ if and only if the image of each cone in the fan $\mathcal{F}$ under $\phi$ is contained in a cone of $\tau$. If we choose such a fan for each $\RR_{\geq 0}^{E(G)}$ in a way that agrees on common faces, then these blowups glue together and resolve the Prym period map on all of $\overline{R}_g$. In particular, for each double cover $\tG\to G$ there is a unique \emph{minimal decomposition}, obtained by pulling back the admissible decomposition $\tau$ along $\phi$.

We now rephrase the monodromy cone construction in the language of tropical geometry. Let $p:\tG\to G$ be a double cover. We view a point $\sum \ell(e)e$ in the interior of the cone $\RR_{\geq 0}^{E(G)}$ as a choice of positive edge lengths on $G$. By the dilation condition~\eqref{eq:dilation}, there is a unique way to define edge lengths on $\tG$ that upgrades $p$ to a double cover $\pi:\tGa\to \Ga$ of weighted metric graphs. It is then elementary to see that $\sum \ell(e)(\te^{\vee}-\iota\te^{\vee})^2$ is the quadratic form on the lattice $\Lambda=\Im(\Id-\iota)$ that defines the continuous Prym variety $\Prym_c(\tGa/\Ga)$\footnote{The paper~\cite{2017Casalaina-MartinGrushevskyHulekLaza} uses the lattice $H_1(\tG,\ZZ)^{[-]}=\frac{1}{2}\Lambda$, where the $\frac{1}{2}$ corresponds to the fact that the principal polarization on the continuous Prym variety is half of the induced polarization.}. Therefore, we may view $\RR_{\geq 0}^{E(G)}$ as a stratum in the moduli space of tropical admissible double covers $R_{g}^{\trop}$, parametrizing double covers with model $\tG\to G$ (and their edge contractions), and the map $\phi$ as the tropical Prym period map that assigns to a double cover $\tGa\to \Ga$ its (continuous) Prym variety $\Prym_c(\tGa/\Ga)$.

We now rephrase the extension criterion for the Prym period map to the second Voronoi compactification in tropical terms.

\begin{proposition}
    \label{prop:Prymextension} Let $\tG\to G$ be a double cover of weighted graphs of genus $g=g(G)$ and let $\mathcal{F}$ be a rational polyhedral fan decomposing the cone $\RR_{\geq 0}^{E(G)}$ of edge lengths of $G$. Then $\mathcal{F}$ resolves the Prym period map $\overline{p}^V_g:\overline{R}_g\dashrightarrow \overline{A}_{g-1}^V$ to the second Voronoi compactification if and only if for any two double covers $\tGa_1\to \Ga_1$ and $\tGa_2\to \Ga_2$ of metric graphs lying in the same cone of $\mathcal{F}$, the Voronoi polytopes $\Vor(\Prym_c(\tGa_1/\Ga_1))$ and $\Vor(\Prym_c(\tGa_2/\Ga_2))$ are normally equivalent.
\end{proposition}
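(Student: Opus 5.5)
The plan is to combine the resolution criterion stated just before the proposition with the tropical reinterpretation of $\phi$, and then use the definition of the second Voronoi decomposition. By that criterion, $\mathcal{F}$ resolves $\overline{p}^V_g$ along $R(\tG/G)$ if and only if $\phi(\sigma)$ lies in a single cone of $\tau^V$ for every cone $\sigma\in\mathcal{F}$. I will show that this condition is equivalent, cone by cone, to normal equivalence of the Voronoi polytopes of the Pryms parametrized by $\sigma$.

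First I fix the identification. A point of $\sigma$ with all coordinates positive is a metric graph $\Ga$ with model $G$, and its unique lift is $\tGa\to\Ga$. A point on the boundary of $\RR_{\geq 0}^{E(G)}$ is read as the edge contraction $\tGa_{\tF}\to\Ga_F$, where $F$ is the set of zero coordinates. The preceding discussion shows that $\phi$ of such a point is the quadratic form of $\Prym_c$ on $\Lambda$, pulled back to $\ZZ^{g-1}$ via the chosen surjection. I will check that this holds on the boundary too, that is, that $\phi$ at a point with zero coordinates on $F$ computes the Prym of the contracted cover. This is the continuity property of $\Prym_c$ from~\cite{2025RoehrleZakharovA}. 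The upshot is that "two covers in the same cone of $\mathcal{F}$" means exactly "two points of the same $\sigma$".

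Next I translate cone membership into Voronoi data. By definition of $\tau^V$, two forms lie in the same open cone of $\tau^V$ exactly when their Voronoi polytopes are normally equivalent. On $\Omega_{g-1}^{\rt}$, forms with a rational null space $N$ are handled by passing to the positive definite form induced on the quotient lattice $\ZZ^{g-1}/N$, which here is $\Lambda$ or a contraction of it. The polytope $\Vor(\Prym_c)$ is intrinsic to the lattice $\Lambda$ and the pairing, so it does not depend on the surjection.

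The key step is then the equivalence between "$\phi(\sigma)$ lies in a single cone of $\tau^V$" and "all points of $\sigma$ have normally equivalent Voronoi polytopes". The reverse implication uses that the relative interior of $\phi(\sigma)$ is convex. If all its points have normally equivalent polytopes, they lie in one open cone $C$ of $\tau^V$, and the face points of $\sigma$ then map into the closure of $C$.

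The main obstacle is that the proposition quantifies over all covers in a cone, including degenerations on faces, while the images of face points can land in proper faces of $C$. I would resolve this by reading "lying in the same cone of $\mathcal{F}$" as "lying in the same relatively open cone". Since $\mathcal{F}$ is a fan and each face of $\sigma$ is itself a cone of $\mathcal{F}$, the criterion applied to all open cones is equivalent to the criterion on closed cones: the image of the closure of an open cell lies in the closure of the corresponding open Voronoi cone. With this convention the two conditions match exactly, which proves the proposition.
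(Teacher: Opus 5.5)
Your proposal is correct and is essentially the paper's argument. The paper gives no separate proof: it presents the proposition as a direct restatement, combining the cone-by-cone resolution criterion for $\phi$, the identification of $\phi(\ell)$ with the quadratic form of $\Prym_c(\tGa/\Ga)$, and the characterization of second Voronoi cones by normal equivalence of Voronoi polytopes. Your extra care about degenerate forms on faces, and about reading ``the same cone of $\mathcal{F}$'' as ``the same relatively open cone,'' makes explicit a convention the paper leaves implicit; it is genuinely needed, since a closed cone also contains contracted covers on its faces, where the Voronoi type typically degenerates.
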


\subsection{The trigonal cone decomposition and the resolution of the Prym period map} We now construct, for every double cover $\tG\to G$ of weighted graphs with $g(G)=4$, a decomposition of the edge length cone $\RR_{\geq 0}^{E(G)}$ that resolves the Prym period map $\overline{p}_4:\overline{R}_4\dashrightarrow \overline{A}_3$ along the boundary stratum $R(\tG/G)$. This resolution is not minimal, indeed, it involves blowups outside the indeterminancy locus of $\overline{p}_4$. However, the minimal resolution can be constructed by joining the resulting cones, and this is the resolution that is given in the Appendix.

We first make the following two observations. Let $p:\tG\to G$ be a free double cover of weighted graphs with $g(G)=4$, and assume that $G$ has the maximal number of $|E(G)|=3g-3=9$ edges; this happens if $G$ is trivalent and has trivial vertex genera. The faces of the cone $\RR_{\geq 0}^{E(G)}$ are in one-to-one correspondence with edge contractions of $p$, and a cone decomposition of $\RR_{\geq 0}^{E(G)}$ resolving the Prym period map at the boundary stratum $R(\tG/G)$ (which is a point) induces a resolving cone decomposition on each face of $\RR_{\geq 0}^{E(G)}$. Any edge-free double cover can be obtained as a contraction of a free double cover, which we can further assume to have the maximal number of edges. Therefore, to resolve the Prym period map on all boundary strata corresponding to edge-free double covers, it is enough to find finding resolving cone decompositions for all free double covers $\tG\to G$ with $|E(G)|=9$ edges.

Now let $p:\tG\to G$ be a dilated double cover, let $F\subset E(G)$ be the set of dilated edges, and let $p_F:\tG_{\tF}\to G_F$ be the edge-free double cover obtained by contracting each connected component of the dilation subgraph to a point. Let $\pi:\tGa\to \Ga$ be a double cover of metric graphs with model $p$ and let $\pi_F:\tGa_{\tF}\to \Ga_{F}$ be the corresponding contraction. Viewing $\pi$ and $\pi_F$ as points of the cones $\RR_{\geq 0}^{E(G)}$ and $\RR_{\geq 0}^{E(G_F)}$, respectively, the assignment $\pi\mapsto \pi_F$ is the natural projection $\RR_{\geq 0}^{E(G)}\to\RR_{\geq 0}^{E(G_F)}$. The involution $\iota:\tGa\to \tGa$ associated to the double cover $\pi$ fixes the dilated edges, therefore the latter do not show up in the lattices defining the continuous Prym variety (see Definition~\ref{def:Prymc}). Hence the continuous Prym varieties are canonically isomorphic:
\[
\Prym_c(\tGa/\Ga)=\Prym_c(\tGa_{\tF}/\Ga_F).
\]
Therefore, by Proposition~\ref{prop:Prymextension}, for any resolution of the edge length cone $\RR_{\geq 0}^{E(G_F)}$ that resolves the Prym period map $\overline{p}_4$ along $R(\tG_{\tF}/G_F)$, its pullback to $\RR_{\geq 0}^{E(G)}$ via the projection map resolves $\overline{p}_4$ along $R(\tG/G)$.

Together, these two observations imply that it suffices to find resolving cone decompositions of the cones $\RR_{\geq 0}^{E(G)}$ corresponding to free double covers $\tG\to G$, where the target graph $G$ has trivial vertex weights and $|E(G)|=9$ edges. Let $G$ be such a graph and let $\Ga$ be a metric graph with model $G$. By the main result of~\cite{2018CoolsDraisma} (restated here as Theorem~\ref{thm:CD}), there exists a tropical modification $\Ga'$ of $\Ga$ admitting a trigonal map $\varphi:\Ga'\to \De$ to a metric tree $\De$. The combinatorial type of the model $f:G'\to D$ of $\varphi$ depends, in general, on the edge lengths of $\Ga$. However, we can decompose the cone $\RR_{>0}^{E(G)}$ of edge lengths of $G$ into subcones, with the property that any two metric graphs $\Ga_1$ and $\Ga_2$ lying in the same subcone admit trigonal structures $\varphi_1:\Ga'_1\to \De$ and $\varphi_2:\Ga'_2\to \De$ with the same underlying model. Extending to the boundary, we obtain a cone decomposition
\begin{equation}
    \RR_{\geq 0}^{E(G)}=\bigcup_{\mu\in \Sigma_G} C_\mu,
\label{eq:trigonalcones}
\end{equation}
which defines the \emph{trigonal decomposition} of the cone $\RR_{\geq 0}^{E(G)}$ (the decomposition is the same for all double covers $\tG\to G$). As explained above, we then pass to faces to obtain cone decompositions for all edge-free double covers, and then trivially induce cone decompositions for all dilated double covers. 

The exact manner in which the trigonal cones are enumerated is outlined below. The attentive reader may note that the trigonal decomposition of a cone $\RR_{\geq 0}^{E(G)}$ is not canonical (this is due due to the fact that a generic metric graph of genus 4 admits \emph{two} trigonal structures), so the decompositions on two maximal cones may not agree on their common face. This issue can be fixed by passing to a common refinement. However, these considerations are ultimately not important, because the trigonal decomposition plays an intermediate role and because the minimal cone decomposition described in the Appendix does have the required consistency on faces.

We now prove the main technical result of this paper.

\begin{theorem} The trigonal decomposition resolves the Prym period map $\overline{t}_4:\overline{R}_4\dashrightarrow \overline{A}_3$.
    
\end{theorem}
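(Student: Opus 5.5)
By the reductions made just before the statement, it suffices to treat a free double cover $\tG\to G$ in which $G$ has trivial vertex weights and $|E(G)|=9$. For such a cover we must show that each trigonal cone $C_\mu$ of \eqref{eq:trigonalcones} maps under $\phi$ into a single cone of the second Voronoi decomposition of $\Omega_3^{\rt}$. By Proposition~\ref{prop:Prymextension}, this amounts to the following: for any two double covers $\tGa_1\to\Ga_1$ and $\tGa_2\to\Ga_2$ with model $\tG\to G$ whose edge lengths lie in the same cone $C_\mu$, the polytopes $\Vor(\Prym_c(\tGa_1/\Ga_1))$ and $\Vor(\Prym_c(\tGa_2/\Ga_2))$ are normally equivalent. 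Since the decompositions on edge-free and dilated covers are obtained from these by passing to faces and by pulling back along the projection, which does not change $\Prym_c$, the general case then follows.

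Fix a cone $C_\mu$ and a point in it, that is, a metric graph $\Ga$ with model $G$. Let $\ph:\Ga'\to\De$ be the trigonal structure of type $\mu$, whose model $f:G'\to D$ is the same for every point of $C_\mu$. We attach two copies of each tree of $\Ga'\setminus\Ga$ to $\tGa$ to get a free double cover $\tGa'\to\Ga'$. Attaching trees changes neither the homology lattice nor the pairing, so $\Prym_c(\tGa'/\Ga')=\Prym_c(\tGa/\Ga)$. Theorem~\ref{thm:trigonal} then gives a tetragonal metric graph $\Pi$ with $\Prym_c(\tGa/\Ga)\simeq\Jac(\Pi)$.

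The key step is to show that the combinatorial model $P$ of $\Pi$ depends only on $\mu$ and on the cover $\tG\to G$, and that the edge lengths of $\Pi$ are linear functions, with positive values on the interior, of the edge lengths of $\Ga$. This should follow from the construction of $\widetilde\Pi$. Over the interior of an edge $d$ of $\De$, the points of $\widetilde\Pi$ are sections of the fiber of $\pi$ over $\ph^*(x)$. These are determined by a choice of sheet of $\tGa'$ over each preimage edge of $d$ in $\Ga'$, so the edges of $\widetilde\Pi$ over $d$ and their gluing at the vertices are read off from $f$ and from the signed-graph structure alone. The length of an edge of $\widetilde\Pi$ over $d$ is $\ell(d)$ divided by a local degree. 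Each $\ell(d)$ is determined by $\ell$ through the dilation condition \eqref{eq:dilation} together with the lengths of the attached trees, and these should be linear in $\ell$ on $C_\mu$, since that is how the cone was defined. Hence the model of $\Pi$ is the same fixed graph $P$ at every point of $C_\mu$.

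The isomorphism $H_1(P,\ZZ)\simeq\Lambda$ should be combinatorial, hence constant in the interior of $C_\mu$. With this, $\phi|_{C_\mu}$ factors as the linear map from $C_\mu$ to the edge-length cone $\RR_{\geq0}^{E(P)}$ followed by the tropical Torelli map of $P$. Proposition~\ref{prop:Torelliextension} says that the Voronoi polytopes of $\Jac(\Pi)$ for all metric graphs $\Pi$ with model $P$ are normally equivalent. Therefore all points in the interior of $C_\mu$ have normally equivalent Prym Voronoi polytopes, and so $\phi(C_\mu)$ lies in the closure of one second Voronoi cone.

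The main obstacle is this constancy step, especially near the boundary of $C_\mu$. There, edges of $\Ga$ may shrink to length zero and the cover can become dilated. Theorem~\ref{thm:trigonal} requires a free cover, and the model of $\Pi$ could degenerate. I would deal with this by working only on the relative interior of each cone, where the model is constant, and then using that second Voronoi cones are closed and that $\phi$ is continuous, so that the image of the closure of $C_\mu$ lies in the closure of the same cone. Faces of $C_\mu$ that are themselves trigonal cones of contracted covers are handled by the same argument applied to those covers.
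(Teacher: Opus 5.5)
Your proposal is correct and follows essentially the same route as the paper. Both reduce to free covers with $|E(G)|=9$, apply the trigonal construction to get $\Jac(\Pi)$ with a model that is fixed on each cone $C_\mu$, and then invoke Proposition~\ref{prop:Torelliextension} and Proposition~\ref{prop:Prymextension}. You are also more careful than the paper on two points it passes over silently: that the identification $H_1(P,\ZZ)\simeq\Lambda$ must be constant on $C_\mu$, and that the boundary of $C_\mu$ is handled by continuity of $\phi$ together with closedness of the Voronoi cones.
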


\begin{proof} By the observations above, it is sufficient to prove the theorem for all free double covers $p:\tG\to G$ with $|E(G)|=9$. Let $\pi:\tGa\to \Ga$ be a free double cover of metric graphs with model $p$. By Theorem~\ref{thm:CD}, there exists a trigonal structure $\varphi:\Ga'\to \De$, where $\Ga'$ is a tropical modification of $\Ga$. For each tree attached to $\Ga'$, we attach two identical trees to $\tGa$ and extend $\pi$ to a double cover $\pi':\tGa'\to \Ga'$. By the trigonal construction (see Theorem~\ref{thm:trigonal}), the tower $\tGa'\to \Ga'\to \De$ determines a tetragonal structure $\kappa:\Pi\to \De$, and furthermore
\[
\Prym_c(\tGa'/\Ga')\simeq \Prym_c(\tGa/\Ga)\simeq \Jac(\Pi).
\]

We now consider two double covers $\pi_1:\tGa_1\to \Ga_1$ and $\pi_2:\tGa_2\to \Ga_2$ lying in the same cone $C_{\mu}$ of the trigonal decomposition. By construction, the trigonal structures $\varphi_1:\Ga_1'\to \De_1$ and $\varphi_2:\Ga_2'\to \De_2$ have the same underlying model, therefore the same is true of the corresponding tetragonal structures $\kappa_1:\Pi_1\to \De_1$ and $\kappa_2:\Pi_2\to \De_2$. In particular, $\Pi_1$ and $\Pi_2$ have the same underlying graph, so by Proposition~\ref{prop:Torelliextension}, the Voronoi polytopes of $\Jac(\Pi_1)$ and $\Jac(\Pi_2)$ are normally equivalent. But then so are the Voronoi polytopes of $\Prym_c(\tGa_1/\Ga_1)$ and $\Prym_c(\tGa_2/\Ga_2)$, so by Proposition~\ref{prop:Prymextension} the trigonal decomposition resolves the Prym period map along $R(\tG/G)$.

\end{proof}

We now briefly describe how the cones $\{C_\mu\}$ are constructed (see~\cite{2025Zakharov} for a detailed exposition). Let $G$ be a stable graph of genus 4 with the maximal number of $3g-3=9$ edges, and let $\Ga\in \RR_{>0}^{E(G)}$ be a metric graph with model $G$. By Theorem~\ref{thm:CD}, there exists a trigonal structure $\Ga'\to \De$ on a tropical modification of $\Ga$ such that the target tree $\De$ has at most $2g+2n-5=9$ edges. For a generic $\Ga$, $\De$ has the full set of 9 edges. Furthermore, $\Delta$ has no vertices of valence greater that 3, otherwise $\Ga$ is not trivalent.  

There are 37 trees on 9 edges having no vertices of valence greater that 3. For each such tree $D$, we carefully enumerate all degree 3 effective harmonic morphisms $f:G'\to D$ by choosing appropriate markings on the vertices and edges of $D$. Of these, we retain only those for which the stabilization $G$ of $G'$ is a trivalent graph of genus 4. The edge lengths of $G$ are linear combinations of edge lengths of $D$, and we discard all trigonal structures for which the edge lengths of $G$ are not linearly independent, since the resulting cones are not of maximal dimension.

The result is a list of 821 trigonal structures $G'\to D$. For each such trigonal structure, varying the edge lengths of $D$ gives a subcone of the cone $\RR_{\geq 0}^{E(G)}$ of edge lengths. The union of the closures of these subcones cover $\RR_{\geq 0}^{E(G)}$, and this is the trigonal decomposition. The entire construction was coded manually in Sage and runs on a 2020 MacBook Air in under an hour.

\subsection{The minimal resolution} It is immediately evident that the trigonal decomposition does not define the minimal resolution of the Prym period map $\overline{p}_4:\overline{R}_4\dashrightarrow \overline{A}_3$. For example, a free double cover $\tG\to G$ with a single negative edge does not edge contract to any $\FS_n$ double cover, therefore the stratum $R(\tG/G)$ lies outside the indeterminancy locus and the cone $\RR_{\geq 0}^{E(G)}$ needs no decomposition. However, the minimal decomposition of a cone $\RR_{\geq 0}^{E(G)}$ required to resolve $\overline{p}_4$ along $R(\tG/G)$ can be determined in the following simple manner. Choose a point $\tGa_{\mu}\to \Ga_{\mu}$ in each trigonal cone $C_{\mu}\subset \RR_{\geq 0}^{E(G)}$ (for example, by setting all edge lengths of the target tree $\Delta$ to 1) and compute the Voronoi polytope $\Vor(\Prym_c(\tGa_{\mu}/\Ga_{\mu}))$\footnote{This portion of the code used an edited ChatGPT output.}. We then join the cones $C_{\mu}$ into larger cones according to whether or not the Voronoi polytopes $\Vor(\Prym_c(\tGa_{\mu}/\Ga_{\mu}))$ are normally equivalent. By Proposition~\ref{prop:Prymextension}, this gives the minimal decomposition resolving the Prym period map.

The minimal decomposition verifies Conjecture 5.1 in~\cite{2025Zakharov}, which I now briefly explain. The \emph{second moment} of a tropical ppav $X=(\Lambda,[\cdot,\cdot])$ is the integral
\[
I_2(X)=\int_{\Vor(X)}\|x\|^2.
\]
The second moment has arithmetic significance~\cite{2022deJongShokrieh}, and it is interesting to find explicit formulas for specific families of tropical ppavs. The second moment of a tropical Jacobian $\Jac(\Ga)$ was computed in~\cite{2023deJongShokrieh}, and the goal of~\cite{2025Zakharov} was to compute the second moment of the tropical Prym variety $\Prym(\tGa/\Ga)$ for $g(\Ga)\leq 4$ using the tropical trigonal construction. As an intermediate result, I proved that, up to a scaling factor, $I_2(\Jac(\Ga))$ is a polynomial in the edge lengths of $\Ga$. However, $I_2(\Prym(\tGa/\Ga))$ turns out to be only piecewise-polynomial in the edge lengths of $\Ga$. Morally, this is explained by Propositions~\ref{prop:Torelliextension} and~\ref{prop:Prymextension}: for $\Jac(\Ga)$, the domain of integration $\Vor(\Jac(\Ga))$ remains normally constant for all edge lengths of $\Ga$, while $\Vor(\Prym(\tGa/\Ga))$ changes polyhedral type. Conjecture 5.1 in~\cite{2025Zakharov} states that the domains of polynomiality of $I_2(\Prym(\tGa/\Ga))$ (which I found computationally) are exactly the domains where $\Vor(\Prym(\tGa/\Ga))$ has constant normal cone, and the present paper verifies this conjecture for $g=4$.

\section*{Appendix: the minimal resolution}

We now list the minimal decompositions of the edge length cones $\RR_{\geq 0}^{E(G)}$ that resolve the Prym map $\overline{p}_4:\overline{R}_4\dashrightarrow \overline{A}_3$ along the boundary stratum $R(\tG/G)$. As explained above, it is sufficient to describe these decompositions for free double covers $\tG\to G$, where $G$ is a trivalent graph having trivial vertex genera and hence $|E(G)|=9$ edges. Unlike the trigonal decomposition, the minimal decomposition of $\RR_{\geq 0}^{E(G)}$ depends on the double cover $\tG\to G$ and not just $G$.

By~\cite{2002AlexeevBirkenhakeHulek} and~\cite{2002Vologodsky}, the indeterminancy locus of $\overline{p}_4:\overline{R}_4\dashrightarrow \overline{A}_3$ is $\overline{\FS_2}\cup\overline{\FS_3}$. Therefore, the cone $\RR_{\geq 0}^{E(G)}$ of a double cover $\tG\to G$ requires a nontrivial decomposition if and only if the double cover contracts to the $\FS_2$ or an $\FS_3$ double cover. It turns out that the required decomposition depends entirely on the number and relative arrangement of such contractions.

Let $p:\tG\to G$ be a double cover. We say that a subgraph $F\subset G$ is \emph{unbalanced} if $p^{-1}(F)$ is connected, in other words, if the restriction of $p$ to $F$ is not the split free double cover. Any subgraph containing a dilated vertex is unbalanced. If $p|_{p^{-1}(F)}$ is free, then $F$ is unbalanced if and only if it has an odd cycle, so in particular a tree is balanced if it contains no dilated vertices. 

We call a set $\{e_1,\ldots,e_n\}\subset E(G)$ of free edges an \emph{$\FS_n$-bond} if the contraction of $\tG\to G$ along the complementary edges is the $\FS_n$ double cover, in other words if the following conditions hold:
\begin{enumerate}
    \item $G\backslash\{e_1,\ldots,e_n\}$ consists of two unbalanced connected components $G_1$ and $G_2$.
    \item $G\backslash F$ is connected for every proper subset $F\subset\{e_1,\ldots,e_n\}$.
\end{enumerate}
Since $g(G_1)+g(G_2)=g(G)-n+1$, $G$ may have an $\FS_n$-bond only for $n\leq g-1$. 

We now list the minimal decomposition of the edge length cone $\RR_{\geq 0}^{E(G)}$ for all free double covers $\tG\to G$ with $|E(G)|=9$ by increasing number of $\FS_2$- and $\FS_3$-bonds, which occur in a variety of configurations. By abuse of notation, we denote the coordinates on the edge length cone (in other words, the lengths of the edges) by the same letters as the edges themselves. All indicated vertices are free.

\subsubsection*{One $\FS_2$-bond, no $\FS_3$-bonds} Denote by $\{e,f\}\subset E(G)$ the unique $\FS_2$-bond, then $G\backslash\{e,f\}=G_1\sqcup G_2$, where $G_1$ and $G_2$ are unbalanced graphs with $g(G_1)=1$ and $g(G_2)=2$:
\begin{center}
\begin{tikzpicture}
    \draw[thin] (0,0) .. controls (1,0.5) and (2,0.5) .. (3,0);
    \draw[thin] (0,0) .. controls (1,-0.5) and (2,-0.5) .. (3,0);
    \node at (1.5,0.6) {$e$};
    \node at (1.5,-0.6) {$f$};
    \draw[fill,white] (0,0) circle(0.5);
    \draw[fill,white] (3,0) circle(0.5);
    \draw[thin] (0,0) circle(0.5);
    \draw[thin] (3,0) circle(0.5);
    \node at (0,0) {$G_1$};
    \node at (3,0) {$G_2$};
\end{tikzpicture}
\end{center}
There are two cones in the decomposition:
\[
\{e\leq f\},\quad \{f\leq e\}.
\]

\subsubsection*{Two $\FS_2$-bonds, no $\FS_3$-bonds} There are two subcases consider. The $\FS_2$-bonds $\{e_1,f_1\}$ and $\{e_2,f_2\}$ may be disjoint, in which case $G$ has the following form:
\begin{center}
\begin{tikzpicture}
    \draw[thin] (0,0) .. controls (0.7,0.5) and (1.3,0.5) .. (2,0);
    \draw[thin] (0,0) .. controls (0.7,-0.5) and (1.3,-0.5) .. (2,0);
    \node at (1,0.6) {$e_1$};
    \node at (1,-0.6) {$f_1$};
    \draw[fill,white] (0,0) circle(0.5);
    \draw[thin] (0,0) circle(0.5);
    \node at (0,0) {$G_1$};
    \draw[fill] (2,0) circle(0.07);
    \draw[thin] (2,0) -- (3,0);
    \draw[fill] (3,0) circle(0.07);
    \draw[thin] (3,0) .. controls (3.7,0.5) and (4.3,0.5) .. (5,0);
    \draw[thin] (3,0) .. controls (3.7,-0.5) and (4.3,-0.5) .. (5,0);
    \node at (4,0.6) {$e_2$};
    \node at (4,-0.6) {$f_2$};
    \draw[fill,white] (5,0) circle(0.5);
    \draw[thin] (5,0) circle(0.5);
    \node at (5,0) {$G_2$};
\end{tikzpicture}
\end{center}
Here $G_1$ and $G_2$ are unbalanced graphs of genus one. There are four cones in the decomposition:
\[
\{e_1\leq f_1,e_2\leq f_2\},\quad \{e_1\leq f_1,f_2\leq e_2\},\quad \{f_1\leq e_1,e_2\leq f_2\},\quad 
\{f_1\leq e_1,f_2\leq e_2\}.
\]
Alternatively, the $\FS_2$-bonds $\{e,f\}$ and $\{e,g\}$ may share a edge, in which case $G$ has the following form:
\begin{center}
\begin{tikzpicture}
    \draw[thin] (0,1) -- (0,-1) -- (1.7,0) -- (0,1);
    \draw[fill,white] (0,1) circle(0.5);
    \draw[thin] (0,1) circle(0.5);
    \node at (0,1) {$G_1$};
    \draw[fill,white] (0,-1) circle(0.5);
    \draw[thin] (0,-1) circle(0.5);
    \node at (0,-1) {$G_2$};
    \node[left] at (0,0) {$e$};
    \node at (1,0.7) {$f$};
    \node at (1,-0.7) {$g$};
    \draw[fill] (1.7,0) circle(0.07);
    \draw[thin] (1.7,0) -- (2.7,0);
    \draw[fill] (2.7,0) circle(0.07);
    \draw[thin] (3,0) circle(0.3);
    \node[right] at (3.3,0) {$h$};
\end{tikzpicture}
\end{center}
Here $G_1$ and $G_2$ are unbalanced graphs of genus one and $h$ is an even loop. There are two cones in the decomposition:
\[
\{e\leq f+g\},\quad\{f+g\leq e\}.
\]

\subsubsection*{Three $\FS_2$-bonds, no $\FS_3$-bond} The graph $G$ has the following form:
\begin{center}
\begin{tikzpicture}
    \draw[thin] (-1,0) -- (1,0) -- (0,-1.7) -- (-1,0);
    \node[above] at (0,0) {$e_1$};
    \node at (0.7,-1) {$e_3$};
    \node at (-0.7,-1) {$e_2$};
    \draw[fill] (-1,0) circle(0.07);
    \draw[fill] (1,0) circle(0.07);
    \draw[fill] (0,-1.7) circle(0.07);
    \draw[thin] (-1,0) -- (-2,0);
    \draw[thin] (1,0) -- (2,0);
    \draw[thin] (0,-1.7) -- (0,-2.7);
    \draw[fill] (-2,0) circle(0.07);
    \draw[fill] (2,0) circle(0.07);
    \draw[fill] (0,-2.7) circle(0.07);
    \draw[thin] (-2.3,0) circle(0.3);
    \draw[thin] (2.3,0) circle(0.3);
    \draw[thin] (0,-3) circle(0.3);
\end{tikzpicture}
\end{center}
The three loops are odd. There are four cones in the decomposition:
\[
\{e_1+e_2\leq e_3\},\quad \{e_1+e_3\leq e_2\},\quad \{e_2+e_3\leq e_1\},\quad \{e_i\leq e_j+e_k\mbox{ for all triples }i,j,k\}.
\]
We note that contracting all edges except the $e_i$ produces the $\mathrm{DR}_3$ example of Vologodsky~\cite{2004Vologodsky}, whose cone (which is three-dimensional) has the same decomposition.

\subsubsection*{No $\FS_2$-bonds, one $\FS_3$-bond} There is a unique graph $G$ with this property:
\begin{center}
\begin{tikzpicture}
    \draw[thin] (0,0) -- (3,0) -- (3,2) -- (0,2) -- (0,0);
    \draw[thin] (0,0) -- (1,1) -- (2,1) -- (3,0);
    \draw[thin] (0,2) -- (1,1);
    \draw[thin] (2,1) -- (3,2);
    \draw[fill] (0,0) circle(0.07);
    \draw[fill] (3,0) circle(0.07);
    \draw[fill] (3,2) circle(0.07);
    \draw[fill] (0,2) circle(0.07);
    \draw[fill] (1,1) circle(0.07);
    \draw[fill] (2,1) circle(0.07);
    \node[above] at (1.5,2) {$e$};
    \node[above] at (1.5,1) {$f$};
    \node[above] at (1.5,0) {$g$};
\end{tikzpicture}
\end{center}
The $\FS_3$-bond is $\{e,f,g\}$ and both triangles are unbalanced. There are three cones in the decomposition:
\[
\{e\leq \min(f,g)\},\quad \{f\leq \min(e,g)\},\quad \{g\leq \min(e,f)\}.
\]

\subsubsection*{One $\FS_2$-bond, one $\FS_3$-bond} The two sets always have exactly one edge in common, so we label them $\{e,f\}$ and $\{e,g,h\}$. The graph $G$ has the following form:
\begin{center}
\begin{tikzpicture}
    \draw[thin] (0,0) -- (4,0);
    \draw[thin] (2,0) .. controls (2.7,0.5) and (3.3,0.5) .. (4,0);
    \draw[thin] (0,0) .. controls (1,-1) and (3,-1) .. (4,0);
    \node at (1.2,0.3) {$f$};
    \node at (3,0.6) {$g$};
    \node at (3,-0.2) {$h$};
    \node at (2,-1) {$e$};
    \draw[fill] (2,0) circle(0.07);
    \draw[fill,white] (0,0) circle(0.5);
    \draw[thin] (0,0) circle(0.5);
    \node at (0,0) {$G_1$};
    \draw[fill,white] (4,0) circle(0.5);
    \draw[thin] (4,0) circle(0.5);
    \node at (4,0) {$G_2$};    
\end{tikzpicture}
\end{center}
Here $G_1$ and $G_2$ are unbalanced graphs of genus one. There are four cones in the decomposition: the cone $\{e\leq f\}$, and the cone $\{f\leq e\}$ that is further subdivided into three subcones:
\[
\{e-f\leq \min(g,h)\},\quad \{g\leq \min(e-f,h)\},\quad \{h\leq \min(e-f,g)\}.
\]

\subsubsection*{Two $\FS_2$-bonds, one $\FS_3$-bond} The three sets have a unique common edge and are otherwise disjoint, so we label them $\{e,f_1\}$, $\{e,f_2\}$, and $\{e,g,h\}$. The graph $G$ has the following form:
\begin{center}
\begin{tikzpicture}
    \draw[thin] (0,0) -- (2,0);
    \draw[thin] (2,0) .. controls (2.7,0.5) and (3.3,0.5) .. (4,0);
    \draw[thin] (2,0) .. controls (2.7,-0.5) and (3.3,-0.5) .. (4,0);
    \draw[thin] (4,0) -- (6,0);
    \draw[thin] (0,0) .. controls (0.7,-1) and (5.3,-1) .. (6,0);
    \draw[fill] (2,0) circle(0.07);
    \draw[fill] (4,0) circle(0.07);
    \node at (1.2,0.3) {$f_1$};
    \node at (3,0.6) {$g$};
    \node at (3,-0.1) {$h$};
    \node at (4.8,0.3) {$f_2$};
    \node at (3,-1) {$e$};
    \draw[fill,white] (0,0) circle(0.5);
    \draw[thin] (0,0) circle(0.5);
    \node at (0,0) {$G_1$};
    \draw[fill,white] (6,0) circle(0.5);
    \draw[thin] (6,0) circle(0.5);
    \node at (6,0) {$G_2$};    

\end{tikzpicture}
\end{center}
Here $G_1$ and $G_2$ are unbalanced graphs of genus one, and the cycle formed by $g$ and $h$ is balanced. There are four cones in the decomposition: the cone $\{e\leq f_1+f_2\}$, and the cone $\{f_1+f_2\leq e\}$ that is further subdivided into three subcones:
\[
\{e-f_1-f_2\leq \min(g,h)\},\quad \{g\leq \min(e-f_1-f_2,h)\},\quad \{h\leq \min(e-f_1-f_2,g)\}.
\]

\subsubsection*{Three $\FS_2$-bonds, one $\FS_3$-bond}  The graph $G$ has the form
\begin{center}
    \begin{tikzpicture}
        \draw[thin] (0,0) -- (0,2) -- (2,2) -- (2,0);
        \draw[thin] (-1,2) -- (0,2);
        \draw[thin] (2,2) -- (3,2);
        \draw[thin] (-1.3,2) circle(0.3);
        \draw[thin] (3.3,2) circle(0.3);
        \draw[fill] (-1,2) circle(0.07);
        \draw[fill] (0,2) circle(0.07);
        \draw[fill] (2,2) circle(0.07);
        \draw[fill] (3,2) circle(0.07);
        \draw[fill] (0,0) circle(0.07);
        \draw[fill] (2,0) circle(0.07);
        \draw[thin] (0,0) .. controls (0.4,0.2) and (1.6,0.2) .. (2,0);
        \draw[thin] (0,0) .. controls (0.4,-0.2) and (1.6,-0.2) .. (2,0);
        \node[above] at (1,2) {$e_1$};
        \node[above] at (1,0.1) {$f$};
        \node[below] at (1,-0.1) {$g$};
        \node[left] at (0,1) {$e_2$};
        \node[right] at (2,1) {$e_3$};
    \end{tikzpicture}
\end{center}
The two loops and the cycle $\{f,g\}$ are odd. The $\FS_2$-bonds are $\{e_1,e_2\}$, $\{e_1,e_3\}$, and $\{e_2,e_3\}$, and the $\FS_3$-bond is $\{e_1,f,g\}$. There are six cones in the decomposition: the cone $\{e_2+e_3\leq e_1\}$ that is further subdivided into three subcones
\[
\{e_1-e_2-e_3\leq \min(f,g)\},\quad \{f\leq \min(e_1-e_2-e_3,g)\},\quad \{g\leq \min(e_1-e_2-e_3,f)\},
\]
and the cones
\[
\{e_1+e_2\leq e_3\},\quad \{e_1+e_3\leq e_2\},\quad \{e_i\leq e_j+e_k\mbox{ for all triples }i,j,k\}.
\]

\subsubsection*{One $\FS_2$-bond, two $\FS_3$-bonds} The $\FS_3$-bonds are always disjoint, and each contains exactly one element from the $\FS_2$-bond, so we denote the sets by $\{e_1,e_2\}$, $\{e_1,f_1,g_1\}$, and $\{e_2,f_2,g_2\}$. The graph $G$ has the following form:
\begin{center}
\begin{tikzpicture}
    \draw[thin] (0,0) -- (3,1.7) -- (2,0) -- (3,-1.7) -- (0,0);
    \draw[thin] (3,1.7) -- (4,0) -- (3,-1.7);
    \draw[thin] (2,0) -- (4,0);
    \draw[fill] (3,1.7) circle(0.07);
    \draw[fill] (2,0) circle(0.07);
    \draw[fill] (3,-1.7) circle(0.07);
    \draw[fill] (4,0) circle(0.07);
    \draw[fill,white] (0,0) circle(0.5);
    \draw[thin] (0,0) circle(0.5);
    \node at (0,0) {$G$};
    \node at (1.5,-1.2) {$e_2$};
    \node at (1.5,1.2) {$e_1$};
    \node at (2.8,0.8) {$f_2$};
    \node at (2.8,-0.8) {$f_1$};
    \node at (3.8,1) {$g_2$};
    \node at (3.8,-1) {$g_1$};
\end{tikzpicture}
\end{center}
Here $G$ is an unbalanced graph of genus one, and both triangles on the right are unbalanced. There are six cones in the decomposition: the cone $\{e_1\leq e_2\}$ that is further subdivided into three subcones
\[
\{e_2-e_1\leq \min(f_2,g_2)\},\quad \{f_2\leq \min(e_2-e_1,g_2)\},\quad \{g_2\leq \min(e_2-e_1,f_2)\},
\]
and the cone $\{e_2\leq e_1\}$ that is likewise subdivided by exchanging the indices 1 and 2.

\subsubsection*{Two $\FS_2$-bonds, two $\FS_3$-bonds} The $\FS_2$-bonds are disjoint, while the remaining five pairwise intersections are distinct edges, therefore we label the sets as $\{e_1,f_1\}$, $\{e_2,f_2\}$, $\{e_1,f_2,g\}$, and $\{e_2,f_1,g\}$. The graph $G$ has the following form:
\begin{center}
\begin{tikzpicture}
    \draw[thin] (0,0) -- (2,1) -- (4,0);
    \draw[thin] (0,0) -- (2,-1) -- (4,0);
    \draw[thin] (2,1) -- (2,-1);
    \draw[fill,white] (0,0) circle(0.5);
    \draw[thin] (0,0) circle(0.5);
    \node at (0,0) {$G_1$};
    \draw[fill,white] (4,0) circle(0.5);
    \draw[thin] (4,0) circle(0.5);
    \node at (4,0) {$G_2$};
    \node[above] at (1,0.5) {$e_1$};
    \node[below] at (1,-0.5) {$f_1$};
    \node[above] at (3,0.5) {$e_2$};
    \node[below] at (3,-0.5) {$f_2$};
    \node[right] at (2,0) {$g$};
    \draw[fill] (2,1) circle(0.07);
    \draw[fill] (2,-1) circle(0.07);
\end{tikzpicture}
    \end{center}
Here $G_1$ and $G_2$ are unbalanced graphs of genus one. There are eight cones in the decomposition. First, there are two cones
\[
\{e_1\leq f_1,e_2\leq f_2\},\quad \{f_1\leq e_1,f_2\leq e_2\}.
\]
The cone $\{e_1\leq f_1,f_2\leq e_2\}$ is further subdivided into three subcones
\[
\{e_1-f_1\leq \min(f_2-e_2,g)\},\quad \{e_2-f_2\leq \min(e_1-f_1,g)\},\quad \{g\leq \min(e_1-f_1,f_2-e_2)\},
\]
and the cone $\{f_1\leq e_1,e_2\leq f_2\}$ is likewise subdivided by exchanging the indices 1 and 2.

\subsubsection*{Three $\FS_2$-bonds, two $\FS_3$-bonds} The graph $G$ has the form
\begin{center}
    \begin{tikzpicture}
        \draw[thin] (0,0) -- (0,2);
        \draw[thin] (2,2) -- (3,1) -- (2,0);
        \draw[thin] (3,1) -- (4,1);
        \draw[thin] (0,0) .. controls (0.4,0.2) and (1.6,0.2) .. (2,0);
        \draw[thin] (0,0) .. controls (0.4,-0.2) and (1.6,-0.2) .. (2,0);
        \draw[thin] (0,2) .. controls (0.4,2.2) and (1.6,2.2) .. (2,2);
        \draw[thin] (0,2) .. controls (0.4,1.8) and (1.6,1.8) .. (2,2);
        \draw[fill] (0,0) circle(0.07);
        \draw[fill] (2,0) circle(0.07);
        \draw[fill] (0,2) circle(0.07);
        \draw[fill] (2,2) circle(0.07);
        \draw[fill] (3,1) circle(0.07);
        \draw[fill] (4,1) circle(0.07);
        \draw[thin] (4.3,1) circle(0.3);
        \node[right] at (2.4,1.6) {$e_1$};
        \node[above] at (1,0.1) {$f_1$};
        \node[below] at (1,-0.1) {$g_1$};
        \node[right] at (2.4,0.4) {$e_2$};
        \node[above] at (1,2.1) {$f_2$};
        \node[below] at (1,1.9) {$g_2$};
        \node[left] at (0,1) {$e_3$};

    \end{tikzpicture}
\end{center}
The loop and the cycles $\{f_1,g_1\}$ and $\{f_2,g_2\}$ are odd. The $\FS_2$-bonds are $\{e_1,e_2\}$, $\{e_1,e_3\}$, and $\{e_2,e_3\}$, and the $\FS_3$-bonds are $\{e_1,f_1,g_1\}$ and $\{e_2,f_2,g_2\}$. There are eight cones in the decomposition. The cone $e_2+e_3\leq e_1$ is further subdivided into three subcones
\[
\{e_1-e_2-e_3\leq \min(f_1,g_1)\},\quad \{f_1\leq \min(e_1-e_2-e_3,g_1)\},\quad \{g_1\leq \min(e_1-e_2-e_3,f_1)\}.
\]
The cone $e_1+e_3\leq e_2$ is likewise subdivided into three subcones by exchanging the indices 1 and 2. Finally, there are two more cones:
\[
\{e_1+e_2\leq e_3\},\quad \{e_i\leq e_j+e_k\mbox{ for all triples }i,j,k\}.
\]

\subsubsection*{Three $\FS_2$-bonds, three $\FS_3$-bonds} The graph $G$ has the form
\begin{center}
    \begin{tikzpicture}
        \draw[thin] (0,0) -- (2,0);
        \draw[thin] (3,1.7) -- (2,3.4);
        \draw[thin] (0,3.4) -- (-1,1.7);
        \draw[thin] (0,3.4) .. controls (0.4,3.6) and (1.6,3.6) .. (2,3.4);
        \draw[thin] (0,3.4) .. controls (0.4,3.2) and (1.6,3.2) .. (2,3.4);
        \draw[thin] (2,0) .. controls (2,0.5) and (2.6,1.5) .. (3,1.7);
        \draw[thin] (2,0) .. controls (2.4,0.2) and (3,1.2) .. (3,1.7);
        \draw[thin] (0,0) .. controls (0,0.5) and (-0.6,1.5) .. (-1,1.7);
        \draw[thin] (0,0) .. controls (-0.4,0.2) and (-1,1.2) .. (-1,1.7);
        \draw[fill] (0,0) circle(0.07);
        \draw[fill] (2,0) circle(0.07);
        \draw[fill] (0,3.4) circle(0.07);
        \draw[fill] (2,3.4) circle(0.07);
        \draw[fill] (3,1.7) circle(0.07);
        \draw[fill] (-1,1.7) circle(0.07);
        \node[below] at (1,0) {$e_1$};
        \node[above] at (1,3.6) {$f_1$};
        \node[below] at (1,3.2) {$g_1$};
        \node[left] at (-0.4, 2.7) {$e_2$};
        \node[left] at (2.4, 1) {$f_2$};
        \node[right] at (2.6, 0.7) {$g_2$};
        \node[right] at (2.4, 2.7) {$e_3$};
        \node[left] at (-0.6, 0.7) {$f_3$};
        \node[right] at (-0.4, 1) {$g_3$};
    \end{tikzpicture}
\end{center}
The cycles $\{f_1,g_1\}$, $\{f_2,g_2\}$, and $\{f_3,g_3\}$ are odd, the $\FS_2$-bonds are $\{e_1,e_2\}$, $\{e_1,e_3\}$, and $\{e_2,e_3\}$, and the $\FS_3$-bonds are $\{e_1,f_1,g_1\}$, $\{e_2,f_2,g_2\}$, and $\{e_3,f_3,g_3\}$. There are ten cones in the decomposition. The cone $\{e_1+e_2\leq e_3\}$ is further subdivided into three subcones
\[
\{e_3-e_1-e_2\leq \min(f_3,g_3)\},\quad \{f_3\leq \min(e_3-e_1-e_2,g_3)\},\quad \{g_3\leq \min(e_3-e_1-e_2,f_3)\}.
\]
The two cones $\{e_2+e_3\leq e_1\}$ and $\{e_3+e_1\leq e_2\}$ are likewise subdivided into three cones each, by cyclically permuting the indices. Finally, there is a remaining cone 
\[
\{e_i\leq e_j+e_k\mbox{ for all triples }i,j,k\}.
\]

\bibliographystyle{amsalpha}
\bibliography{references}{}

\end{document}